\newcolumntype{?}{!{\vrule width 1pt}}
\theoremstyle{theorem}
\newtheorem{theorem}{Theorem}
\newtheorem{proposition}[theorem]{Proposition}
\newtheorem{lemma}[theorem]{Lemma}
\newtheorem{corollary}[theorem]{Corollary}
\theoremstyle{definition}
\newtheorem{definition}[theorem]{Definition}
\newtheorem{conjecture}[theorem]{Conjecture}
\newtheorem{remark}[theorem]{Remark}
\numberwithin{theorem}{section}
\newenvironment{example}
{\pushQED{\qed}\examplex}
{\popQED\endexamplex}
\newtheoremstyle{citing}% name
{}%      Space above, empty = `usual value'
{}%      Space below
{\itshape}% Body font
{}%         Indent amount (empty = no indent, \parindent = para indent)
{\bfseries}% Thm head font
{\textbf{.}}%        Punctuation after thm head
{.5em}%     Space after thm head: " " = normal interword space;
\theoremstyle{citing}
	\newtheorem*{custom}{}}
\newcommand{\PP}{\mathbb{P}}
\newcommand{\RR}{\mathbb{R}}
\newcommand{\CC}{\mathbb{C}}
\newcommand{\ZZ}{\mathbb{Z}}
\newcommand{\TT}{\mathbb{T}}
\DeclareMathOperator{\dml}{d_{ML}}
\DeclareMathOperator{\Trop}{Trop}
\DeclareMathOperator{\ord}{ord}
\DeclareMathOperator{\dlog}{dlog}
\DeclareMathOperator{\Spec}{Spec}
\DeclareMathOperator{\init}{in}
\DeclareMathOperator{\BS}{BS}
\DeclareMathOperator{\LCT}{LCT}
\tikzset{
  symbol/.style={
    draw=none,
    every to/.append style={
      edge node={node [sloped, allow upside down, auto=false]{$#1$}}}
  }
}
\title[MLE from a Tropical and a Bernstein--Sato Perspective]{Maximum Likelihood Estimation from a Tropical\\and a Bernstein--Sato Perspective}
\author{Anna-Laura Sattelberger}
\address{Anna-Laura Sattelberger\\Max-Planck-Institut f\"ur Mathematik in den Natur\-wis\-sen\-schaf\-ten\\Inselstra{\ss}e 22, 04103 Leipzig, Germany}
\email{anna-laura.sattelberger@mis.mpg.de}
\author{Robin van der Veer}
\address{Robin van der Veer\\KU Leuven\\Department of Mathematics\\Celestijnenlaan 200B bus 2400\\B-3001 Leuven, Belgium}
\email{robin.vanderveer@kuleuven.be}
\subjclass[2020]{14T90, 14F10 (primary), 62R01, 14Q15 (secondary).}
\keywords{Maximum Likelihood Estimation, Tropical Geometry, Bernstein--Sato ideal, Likelihood Geometry.\\ This is a pre-copyedited, author-produced version of an article accepted for publication in {\em International Mathmatics Research Notices} following peer review. The version of record rnac016 is available online at: \href{https://doi.org/10.1093/imrn/rnac016}{https://doi.org/10.1093/imrn/rnac016}}
\begin{document}
	\maketitle

\begin{abstract}In this article, we investigate Maximum Likelihood Estimation with tools from Tropical Geometry and Bernstein--Sato Theory.  We investigate the critical points of very affine varieties and study their asymptotic behavior. We relate these asymptotics to particular rays in the tropical variety as well as to Bernstein--Sato ideals and give a connection to Maximum Likelihood Estimation in Statistics. 
\end{abstract}

\vspace*{-1mm}
\setcounter{tocdepth}{1}
\tableofcontents

\vspace*{-7mm}

\section{Introduction}
\subsection*{Maximum likelihood estimation}
Let $X$ be a smooth closed subvariety of the algebraic torus $(\CC^*)^p$ and denote by $t_1,\dots,t_p$ the coordinates on the torus restricted to $X$. Given $\alpha\in\CC^p$, the {\em maximum likelihood estimation (MLE)}  problem is to find the zeroes of the logarithmic one-form $\dlog(t_1^{\alpha_1}\cdots t_p^{\alpha_p})$. These zeroes are also referred to as {\em critical points}. As the terminology suggests, this problem arises from Statistics. We refer to~\cite{HS14} for further details and examples related to this statistical context. In this article, we investigate this problem from an algebro-geometric point of view.
The geometric object underlying our analysis is the {\em critical locus} $C\subseteq X\times\PP^{p-1}$, consisting of all pairs $(x,\alpha)$ for which the logarithmic differential $\dlog t^\alpha$ vanishes at~$x$. As proven by Franecki--Kapranov in \cite{FK} and later by Huh in~\cite{huh_2013}, for a general data vector~$\alpha$, the number of critical points is finite and equal to the {\em signed Euler characteristic} $(-1)^{\dim X}\chi(X)$ of $X$.  We refer to non-general data vectors as {\em special} values.
The number of critical points for general $\alpha$ is also called the {\em maximum likelihood degree} of~$X$,  denoted by $\dml(X)$.  
\subsection*{Maximum likelihood estimation approaching non-general data vectors}
In this article, we study the behavior of the critical points when approaching a special data vector along a curve consisting of general values.
More precisely, we study the subset $S_F\subseteq \PP^{p-1}$ consisting of all data vectors $\alpha$ for which at least one of the $\dml(X)$ many critical points leaves $X$ as we approach~$\alpha$. We refer to the components of $S_F$ that are of codimension one as {\em critical slopes} of~$F.$ In this case, at least one of the coordinates of some critical point approaches either zero or infinity. To obtain more refined information of the asymptotic behavior, we keep track of the direction in which this critical point escapes by recording it in a set $Q_{F,\alpha}\subseteq \ZZ^p$. The points in $Q_{F,\alpha}$ essentially describe  which torus orbits in a tropical compactification of~$X$ are approached by the critical points as one approaches~$\alpha$. In Section~\ref{section asymptotic}, we introduce both of these objects in the general setting of a smooth variety $X$ with a tuple of nowhere vanishing regular functions $F=(f_1,\dots,f_p)$ on it. It turns out that both $S_F$ and $Q_{F,\alpha}$ are closely related to the divisorial valuations corresponding to the irreducible components of the boundary in a smooth compactification of $X$ with simple normal crossings (SNC) boundary. The critical points for special data vectors were, among others, also studied in~\cite{cohen_denham_falk_varchenko_2011} for the case of hyperplane arrangements and in \cite{RT18} using probabilistic methods.
To study $S_F$ and $Q_{F,\alpha}$, a good understanding of a smooth compactification and its boundary components is essential, which we tackle in Section~\ref{section MLEveryaffine}. 

\subsection*{Sch\"on varieties and tropical compactifications}
A natural class of compactifications of closed subvarieties $X$ of tori is provided by the tropical compactifications of~\cite{Tev}. Tropical compactifications are constructed by taking the closure $X^\Sigma$ of $X$ in the toric variety associated to a fan $\Sigma$ whose support is the tropical variety of~$X$. In general, these compactifications are not smooth. If the variety $X$ is {\em sch\"on} (see \cite[Definition 3.6]{huh_2013}), it allows for a  tropical compactification that is smooth and whose boundary is a simple normal crossings divisor. This class of subvarieties of tori provides a common generalization of hyperplane arrangement complements and of Newton non-degenerate hypersurfaces as shown in~\cite{Hov77}. For these compactifications, we prove the following theorem.
\begin{custom}[Theorem~\ref{mainthm}]
Let $E_i$ be an irreducible component of the boundary in a smooth tropical compactification of $X$ with simple normal crossings boundary.
Assume that $\chi(E_i^\circ)\not=0$, where $E_i^\circ \coloneqq E_i \setminus \cup_{j\neq i} E_j$. Then for general $\alpha$ in the hyperplane $H_{E_i}=\{\ord_{E_i}(t_1)s_1+\dots + \ord_{E_i}(t_p)s_p=0\}$, the vector $(\ord_{E_i}(t_1),\dots, \ord_{E_i}(t_p))$ is contained in $Q_{F,\alpha}$. In particular, the hyperplane $H_{E_i}$ is contained in $S_F$. Moreover, these hyperplanes are the only codimension-one components of~$S_F$.
\end{custom}
The toric variety associated to $\Sigma$ has codimension-one torus orbits $\mathcal{O}_\tau$ indexed by the rays $\tau\in \Sigma$. The boundary component $E_i^\circ$ is an irreducible component of $X^\Sigma\cap\mathcal{O}_\tau$ for some $\tau\in\Sigma$ and thus the condition that $\chi(E_i^\circ)\not=0$ is a property of the ray $\tau$ in the tropical variety of~$X$. To translate this condition into a tropical condition, we define the notion of \textit{rigid} rays in $\Trop(X)$;  these are the rays for which any small perturbation of the ray changes the associated initial ideal.
\begin{custom}[Proposition~\ref{main}]
	Let $X$ be a sch\"{o}n very affine variety and $\Sigma$ a fan supported on the tropical variety of $X$ such that the closure $X^{\Sigma}$ of $X$ in the toric variety associated to $\Sigma$ is smooth and $X\setminus X^\Sigma$ is a SNC divisor.
Assume that for all $\tau\in \Sigma$, the intersection $X^{\Sigma}\cap\mathcal{O}_\tau$ is connected. If $\tau$ is a rigid ray, then for general $\alpha$ in the hyperplane orthogonal to $\tau$, the primitive generator of the ray lies in $Q_{F,\alpha}$. In particular, $\PP(\tau^\perp)\subseteq S_F$. This gives a bijection of the rigid rays in $\Trop(X)$ and the codimension-one components of~$S_F$.
\end{custom}
This proposition completely determines the critical slopes of $F$ in terms of tropical data associated to $X$. By the very definition of $Q_{F,\alpha}$, this proposition gives a description of the asymptotic behavior of the critical points as we approach the special set of data vectors formed by the hyperplane orthogonal to a rigid ray. We refer to Section~\ref{subsection zeroeslog} for a more explicit interpretation of this result in terms of maximum likelihood estimation.

\subsection*{Slopes of Bernstein--Sato varieties}
In Section~\ref{section BernsteinSato}, we relate these results to\linebreak\mbox{Bernstein--Sato} ideals. For a tuple $F=(f_1,\dots,f_p)$ of regular functions on a smooth algebraic variety, the {\em Bernstein--Sato ideal}  of $F$ is the ideal $B_F$ in $\CC[s_1,\dots,s_p]$ consisting of polynomials~$b$ for which there exists a global algebraic linear partial differential operator $P(s_1,\dots,s_p)$ such that
$$P\bullet \left( f_1^{s_1+1}\cdots f_p^{s_p+1}\right) = b \cdot f_1^{s_1}\cdots f_{p}^{s_p}.$$
This definition is a generalization of the Bernstein--Sato polynomial of a single regular function $f$. 
It is an intricate invariant of the tuple $F$ which is related to the singularities of the hypersurface $V(f_1\cdots f_p)$, see for instance~\cite{BudurBSLS} or \cite{bvwz} for relations to monodromy eigenvalues.  Sabbah~\cite{Sabbah} showed that the Bernstein--Sato ideal is non-zero and that the codimension-one components of the Bernstein--Sato variety $V(B_F)$ are affine hyperplanes with rational coefficients. The set of {\em Bernstein--Sato slopes of $F$}, denoted by $\BS_F\subseteq \CC^p$, is the union of these hyperplanes after translating them to the origin. We denote by $\PP(\BS_F)$ the projectivization of this set. Maisonobe~\cite{Mai16} gave a geometric description of the Bernstein--Sato slopes. We denote by $Y$ the closure of $X$ inside $\CC^p$ and will assume that $Y$ is smooth. We then study the Bernstein--Sato ideal of the tuple of coordinate functions on $\CC^p$ restricted to $Y$. In this setup, we deduce the following theorem using Maisonobe's description of the Bernstein--Sato slopes.
\begin{custom}[Theorem~\ref{bs-sf}]
Under the assumptions of Proposition~\ref{main}, the irreducible components of $S_F\cap \mathbb{P}(\BS_F)$ are exactly the hyperplanes $\mathbb{P}(\tau^\perp)$ for $\tau$ a rigid ray contained in~$\RR_{\geq 0}^p$.
\end{custom}
A study of $\BS_F$ using the critical locus was also undertaken in \cite{BMM} under a different technical assumption, namely that the tuple $F$ is \textit{sans \'{e}clatements en codimension $0$}. We would like to point out that Lemma 2.6 therein, treating the special case $p=2$, is analogous to our Theorem~\ref{bs-sf}. In Example~\ref{example incomparable}, we demonstrate that, in general, the sets $S_F$ and $\mathbb{P}(\BS_F)$ are incomparable in the sense that either can contain irreducible components not contained in the other one. Theorem~\ref{bs-sf} explains in a rigorous way the observations made in~\cite[Example~3.1]{SatStu19}. We revisit this example in Section~\ref{section examples}.

Theorem~\ref{bs-sf} provides information on the slopes of the Bernstein--Sato variety, but not on the affine translation with which these slopes appear. In Subsection~\ref{subsection LCT}, we formulate a conjecture related to these affine translates. This conjecture is formulated in terms of the \textit{log-canonical threshold polytope} of the tuple~$F$. We prove this conjecture for the case of indecomposable central hyperplane arrangements, in which case it proves  \cite[Conjecture 3]{BudurBSLS} for complete factorizations of hyperplane arrangements.
It turns out that in this case the object involved has already extensively been studied under a different name: it is the {\em matroid polytope} of \cite{Feichtner2005}, and Proposition 2.4 in loc. cit. precisely recovers our conjecture. The Bernstein--Sato ideal of hyperplane arrangements has recently also been studied using different methods in~\cite{Bath19}, \cite{maisHPA},  and~\cite{Wu20}.

\bigskip
In summary, our results connect Tropical Geometry, Bernstein--Sato Theory, and Likelihood Geometry. Among others, our article provides new tools for Algebraic Statistics and Particle Physics: in the recent article~\cite{ST20}, Sturmfels and Telen outline a link of MLE for discrete statistical models to scattering amplitudes. 

\subsection*{Notation and conventions}
By a {\em variety} we mean an integral, separated scheme of finite type over the complex numbers, unless explicitly stated otherwise. A property of a variety is called {\em general} if it holds true on a Zariski dense open subset of the variety. For a smooth variety $X$ with compactification $X\hookrightarrow Y$ s.t. $Y$ is smooth and $E\coloneqq Y \setminus X =   \bigcup_{i=1}^q E_i$ is a divisor with irreducible components $E_i$, we denote by
$$E_i^\circ \coloneqq E_i\setminus \bigcup_{j\not=i}E_j$$
the complement of $E_i$ by the other components. When $X$ is equipped with a tuple  $F=(f_1,
\dots,f_p)$ of regular functions, we denote by $H_{E_i}$ the hyperplane
$$H_{E_i}  \coloneqq  \left\{\ord_{E_i}(f_1)s_1+\dots+\ord_{E_i}(f_p)s_p=0\right\} \subseteq  \PP^{p-1}.$$
We denote by $\Delta$ the formal disc $\Spec \CC\llbracket t\rrbracket $, by $\Delta^\circ\coloneqq \Spec \CC (\!( t)\!)$ its generic point, and by $0$ its closed point.

Throughout the article, we will always assume that all cones in a fan are strongly convex.

\section{Asymptotic behavior of critical points}\label{section asymptotic}
In this section, we introduce the objects of study and investigate basic properties.
We study smooth varieties $X$ with a $p$-tuple $F=(f_1,\dots,f_p)$ of nowhere vanishing regular functions on $X$. 
For $\alpha = (\alpha_1,\ldots,\alpha_p)\in \CC^p$, 
$$\dlog f^\alpha  =   \sum_{i=1}^p\alpha_i\frac{df_i}{f_i} \in H^0\left( X,\Omega_X^1\right)$$
denotes the {\em logarithmic differential} of $f^{\alpha}\coloneqq f_1^{\alpha_1}\cdots f_p^{\alpha_p}$.
\begin{definition}\label{def critical locus}
The {\em critical locus} of $F$, encoding the zeros of $\dlog f^\alpha$, is defined to be
$$C_{F}  \coloneqq  \left\{(x,\alpha)\in X\times\PP^{p-1}\mid \dlog f^\alpha (x)=0  \right\} \subseteq X\times \PP^{p-1}.$$
\end{definition}
Let $X\hookrightarrow Y$ be a compactification of $X$ and denote by $E\coloneqq Y\setminus X$ the boundary of this compactification.
\begin{definition}\label{def asymptotic critical locus}
The {\em asymptotic critical locus} of $F$ with respect to $Y$, denoted $C_{F,Y}$, is the closure of $C_F$ inside $Y\times\PP^{p-1}$, i.e., $C_{F,Y}\coloneqq \overline{C_F}^{Y\times \mathbb{P}^{p-1}}$.
\end{definition}
Denote by $\pi_1:Y\times\PP^{p-1}\to Y$ (resp. $\pi_2:Y\times\PP^{p-1}\to \PP^{p-1}$) the projection to the first (resp. second) factor.
\begin{definition}\label{slopes}
Associated to $F$ we define the variety
$$S_F \coloneqq \pi_2 \left( C_{F,Y}\cap \pi_1^{-1}(E)\right)   \subseteq  \mathbb{P}^{p-1}$$
and refer to its irreducible  components of codimension one as  {\em critical slopes} of $F.$
\end{definition}
The variety $S_F$ is the image of a proper variety and as such a closed subvariety of~$\mathbb{P}^{p-1}$.

	\begin{example}\label{ex:hpa}
		Let $X'\subseteq \CC^2$ be the complement of the arrangement defined by\linebreak$f=xy(x-y)(x-1)$, the four factors of which form the tuple $F=(f_1,f_2,f_3,f_4)$. As compactification, we take $\mathbb{P}^2$. With a computer algebra system one confirms that the critical slopes of $F$ are $$\left\{s_1+s_2+s_3+s_4=0\right\} \cup  \left\{s_2+s_3=0\right\} \cup \left\{s_1+s_2+s_3=0\right\} \cup \bigcup_{i=2}^4\left\{s_i=0\right\}.$$
More precisely, $\{s_1+s_2+s_3+s_4=0\} = \pi_2(C_{F,\mathbb{P}^2}\cap \pi_1^{-1}(H_\infty))$, $\{s_2+s_3=0\}=\pi_2(C_{F,\mathbb{P}^2}\cap \pi_1^{-1}([0:1:0])),$ $\{s_1+s_2+s_3=0\}= \pi_2(C_{F,\mathbb{P}^2}\cap \pi_1^{-1}([0:0:1])),$ and $\{s_i=0\} $ comes from $\{f_i=0\}$ for $i=2,3,4.$
		\end{example}
 
\begin{lemma}\label{closureLemma}
Let $Z$ be a variety, $X\subseteq  Z$ a locally closed subvariety and  $x\in Z$ a closed point. Then $x \in \overline{X}$ if and only if there exists a morphism $\gamma:\Delta \to Z$ such that $\gamma(\Delta^\circ)\in X$ and $\gamma(0)=x$.
\end{lemma}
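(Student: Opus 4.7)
The plan is to handle the two directions separately; the ``if'' direction is essentially formal, while the ``only if'' direction rests on a curve-selection step.

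For the ``if'' direction, the scheme $\Delta = \Spec \CC\llbracket t \rrbracket$ has only two points, with the closed point $0$ lying in the closure of the generic point $\Delta^\circ$. Continuity of the morphism $\gamma$ then immediately gives $x = \gamma(0) \in \overline{\gamma(\Delta^\circ)} \subseteq \overline{X}$.

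For the ``only if'' direction, assume $x \in \overline{X}$. I first replace $Z$ by the closed subvariety $\overline{X}$, which is legitimate because any morphism into $\overline{X}$ composes with $\overline{X} \hookrightarrow Z$; after this reduction, $X$ is open and dense in $Z$. If $x \in X$, the constant morphism $\gamma \equiv x$ suffices, so assume $x$ lies in the proper closed complement $Y \coloneqq Z \setminus X$. The key geometric step is to produce an irreducible curve $C \subseteq Z$ with $x \in C$ and $C \not\subseteq Y$. Granted such a $C$, I take its normalization $\nu \colon \tilde C \to C$, a smooth curve, and pick $\tilde x \in \nu^{-1}(x)$. Since $\tilde x$ is a smooth $\CC$-rational point of $\tilde C$, the completed local ring $\widehat{\mathcal{O}}_{\tilde C,\tilde x}$ is isomorphic to $\CC\llbracket t \rrbracket$, inducing a morphism $\Delta \to \tilde C$. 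Composing with $\nu$ and the inclusion $C \hookrightarrow Z$ yields $\gamma$: the closed point maps to $x$ by construction, and the generic point maps to the generic point of $C$, which lies in $X$ precisely because $C \not\subseteq Y$.

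The main obstacle is the construction of the curve $C$, which I plan to handle by induction on $d \coloneqq \dim Z$. The case $d = 1$ is immediate with $C = Z$ itself. For $d \geq 2$, I pass to an affine open neighborhood of $x$ and intersect $Z$ with a generic hypersurface through $x$; standard dimension arguments show that for a sufficiently general such hypersurface, the intersection is a $(d-1)$-dimensional subscheme possessing an irreducible component through $x$ that is not contained in $Y$. The induction hypothesis, applied to this component together with its intersection with $Y$, then produces the required curve $C$.
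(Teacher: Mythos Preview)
Your proof is correct and follows essentially the same route as the paper: produce a curve $C$ through $x$ meeting $X$, normalize, and use that the completed local ring of a smooth curve at a $\CC$-point is $\CC\llbracket t\rrbracket$. The only difference is that the paper simply asserts the existence of such a curve, whereas you supply a proof of this curve-selection step by induction on $\dim Z$ via generic hypersurface slicing.
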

\begin{proof}
The existence of such $\gamma$ clearly implies that $x\in \overline{X}$. 
For the reverse implication, let $x\in \overline{X}$. Take a curve $C\subseteq Z$ with $x\in C$ and $C\cap X\not=\emptyset$. Let $\tilde{C}$ denote the normalization of $C$ and choose a point $\tilde{x}\in\tilde{C}$ lying over~$x$. Since $\tilde{C}$ is smooth, by the Cohen structure theorem we can construct the required morphism as
$\mathcal{O}_{\overline{X},x}\to\widehat{\mathcal{O}}_{\overline{X},x}\to  \widehat{\mathcal{O}}_{C,x}\to  \widehat{\mathcal{O}}_{\tilde{C},\tilde{x}} \cong  \CC\llbracket t \rrbracket .$
\end{proof}
As an immediate consequence, we deduce the following corollary.
\begin{corollary}\label{jets}
A point $\alpha\in\PP^{p-1}$ lies in $S_F$ if and only if there exists a morphism \mbox{$\gamma\colon \Delta\to Y\times \PP^{p-1}$} such that $\gamma(\Delta^\circ)\in  C_F$ and $\gamma(0)\in E\times \{\alpha\}$. 
\end{corollary}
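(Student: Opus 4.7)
The plan is to apply Lemma~\ref{closureLemma} almost verbatim. Take $Z = Y\times \PP^{p-1}$ and the locally closed subvariety to be $C_F$ itself: indeed, $C_F$ is a closed subvariety of the open set $X\times \PP^{p-1}\subseteq Y\times \PP^{p-1}$ and thus locally closed in $Z$, with closure $\overline{C_F}^Z = C_{F,Y}$ by Definition~\ref{def asymptotic critical locus}.

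First I would unpack the definition of $S_F$. A closed point $\alpha\in\PP^{p-1}$ lies in $S_F$ precisely when $\alpha \in \pi_2(C_{F,Y}\cap \pi_1^{-1}(E))$. The set $C_{F,Y}\cap \pi_1^{-1}(E)$ is closed in the proper variety $Y\times \PP^{p-1}$, so the restriction of $\pi_2$ to it is proper; in particular, its image is closed and contains $\alpha$ if and only if there exists an honest closed point $(y,\alpha)\in C_{F,Y}$ with $y\in E$, equivalently $(y,\alpha)\in \overline{C_F}^Z\cap (E\times \{\alpha\})$.

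Next I would invoke Lemma~\ref{closureLemma} applied to the point $(y,\alpha)\in\overline{C_F}^Z$. This produces a morphism $\gamma\colon \Delta\to Y\times \PP^{p-1}$ with $\gamma(\Delta^\circ)\in C_F$ and $\gamma(0)=(y,\alpha)\in E\times\{\alpha\}$, which is exactly the desired conclusion. Conversely, given such a $\gamma$, the closed point $\gamma(0)$ lies in $\overline{C_F}^Z = C_{F,Y}$ by continuity, so $\gamma(0)\in C_{F,Y}\cap (E\times\{\alpha\})$ and hence $\alpha = \pi_2(\gamma(0))\in S_F$.

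I do not anticipate a genuine obstacle: the corollary is a direct geometric re-packaging of Lemma~\ref{closureLemma}. The only point requiring a moment's care is the observation that $C_F$ is locally closed in $Y\times\PP^{p-1}$ so that the lemma is applicable, and the use of properness of $\pi_2$ on the closed subvariety $C_{F,Y}\cap\pi_1^{-1}(E)$ to translate membership in the image into the existence of an actual geometric preimage.
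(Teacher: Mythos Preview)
Your proposal is correct and is precisely the argument the paper has in mind: the corollary is stated as an immediate consequence of Lemma~\ref{closureLemma}, and you have simply spelled out that immediate deduction by taking $Z=Y\times\PP^{p-1}$ and the locally closed subset $C_F$. The only remark is that your appeal to properness of $\pi_2$ is unnecessary---membership of $\alpha$ in the set-theoretic image already gives a closed preimage point over~$\CC$---but this does not affect the validity of the argument.
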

To keep track of more refined information about the asymptotic behavior of the critical points, we introduce the following subsets of $\ZZ^p$.
\begin{definition}\label{QF}
Let $Y$ be a compactification of $X$ and let $\alpha\in\mathbb{P}^{p-1}$. Let $\pi_1\colon Y\times\PP^{p-1}\to Y$ denote the projection to the first factor. We denote by $Q_{F,\alpha}\subseteq\mathbb{Z}^p$ the set of vectors that arise in the following manner. A vector is in $Q_{F,\alpha}$ if and only if it is of the form $ (\ord_t(\gamma^*(\pi_1^*f_1)),\dots, \ord_t(\gamma^*(\pi_1^*f_p)))$  for some morphism $\gamma:\Delta \to Y\times\PP^{p-1}$ such that $\gamma(\Delta^\circ)\in  C_F$ and $\gamma(0)\in E\times \{\alpha\}$.
\end{definition}
For brevity, we denote $ (\ord_t(\gamma^*(\pi_1^*f_1)),\dots, \ord_t(\gamma^*(\pi_1^*f_p))$ by $\ord_t(\gamma^*(\pi_1^*F))$. In general, it is difficult to compute $Q_{F,\alpha}$ explicitly. In Theorem~\ref{mainthm} and Proposition~\ref{main}, we will recover it partially.
The following lemma justifies suppressing the compactification in the notations $S_F$ and~$Q_{F,\alpha}$.
\begin{lemma}
\label{independence}
The sets $S_F$ and $Q_{F,\alpha}$, $\alpha\in\PP^{p-1}$, do not depend on the choice of a compactification.
\end{lemma}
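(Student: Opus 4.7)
The plan is to characterize $S_F$ and $Q_{F,\alpha}$ intrinsically in terms of morphisms $\Delta^\circ \to C_F$, with no reference to a compactification $Y$, so that independence follows immediately. The key tool is the valuative criterion of properness: since any compactification $Y$ is proper over $\Spec\CC$ and $\PP^{p-1}$ is proper, the product $Y\times \PP^{p-1}$ is proper, and therefore every morphism $\gamma_0\colon \Delta^\circ \to C_F$ extends uniquely to a morphism $\tilde{\gamma}\colon \Delta \to Y\times \PP^{p-1}$. Through this extension, the morphisms $\gamma$ featured in Corollary~\ref{jets} and Definition~\ref{QF} are in bijection with such $\gamma_0$'s subject to an additional condition that I identify next.

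First I would show that, because $C_F$ is closed in $X\times \PP^{p-1}$, the condition $\tilde{\gamma}(0)\in E\times \PP^{p-1}$ is equivalent to the statement that $\gamma_0$ does \emph{not} extend to a morphism $\Delta \to X\times \PP^{p-1}$ (equivalently, to $\Delta \to C_F$). This is an intrinsic property of $\gamma_0$, insensitive to the choice of $Y$. Next I would apply the valuative criterion to the proper variety $\PP^{p-1}$ alone: the composition $\pi_2\circ \gamma_0$ has its own unique extension $\Delta \to \PP^{p-1}$, which by uniqueness must coincide with $\pi_2\circ \tilde{\gamma}$. Hence $\pi_2(\tilde{\gamma}(0))$ is likewise determined by $\gamma_0$ alone.

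Combining these two observations yields the intrinsic description: $\alpha \in S_F$ if and only if there exists a morphism $\gamma_0\colon \Delta^\circ \to C_F$ which does not extend to $\Delta \to C_F$ and whose composition with $\pi_2$ extends to send $0\mapsto \alpha$. For $Q_{F,\alpha}$, the entries $\ord_t(\gamma^*\pi_1^*f_i)$ are computed on $\Delta^\circ$, so they depend only on the projection $\pi_1\circ \gamma_0\colon \Delta^\circ \to X$, and the same characterization of admissible $\gamma_0$ applies. Since neither description mentions $Y$, the lemma follows. The only subtle point is the observation that $C_F$ is closed in $X\times \PP^{p-1}$, which is what makes ``escape to the boundary'' an intrinsic property of the arc rather than one depending on the compactification.
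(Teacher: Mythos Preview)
Your proof is correct and uses essentially the same mechanism as the paper: the valuative criterion of properness to extend $\gamma_0=\gamma|_{\Delta^\circ}$ uniquely into any compactification, together with the observation that whether the limit lands in $E$ and what $\pi_2$-value and $t$-orders it produces depend only on $\gamma_0$. The paper carries this out by directly comparing two compactifications $Y_1,Y_2$ and transporting the arc from one to the other, whereas you package the same argument as an intrinsic characterization of $S_F$ and $Q_{F,\alpha}$ in terms of arcs $\Delta^\circ\to C_F$; the content is the same.
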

\begin{proof} 
Let $j_1:X\hookrightarrow Y_1$ and $j_2:X\hookrightarrow Y_2$ be two compactifications of $X$ with boundaries $E_i=Y_i\setminus X$ and projections $\pi_{i,2}:C_{F,Y_i}\to \mathbb{P}^{p-1}$,  $i=1,2$. Let $\alpha\in \pi_{1,2}(C_{F,Y_1}\cap \pi_{1,1}^{-1}(E_1))$. 
By Corollary~\ref{jets} this is equivalent to the existence of a morphism
$\gamma \colon\Delta  \to Y_1 \times \mathbb{P}^{p-1},$
such that
\begin{center}
(1) $\gamma(\Delta^\circ)\in C_F\subseteq C_{F,Y_1}$ \quad (2) $\pi_{1,1}(\gamma(0))\in E_1$, and \quad (3) $\pi_{1,2}(\gamma(0))=\alpha.$
\end{center}
By properness of $Y_2\times\PP^{p-1}$, the restricted morphism $\gamma^{\circ}:\Delta^{\circ}\to C_{F}$ can be extended to a morphism $\tilde\gamma:\Delta \to Y_2\times\PP^{p-1}$. We need to show that $\tilde{\gamma}(0)\in E_2\times\{\alpha\}$. Assume that $\pi_{2,1}(\tilde\gamma(0))\not\in E_2$. Then $\pi_{2,1}(\tilde\gamma(0))\in X$. But this is not possible, since then also $\pi_{1,1}(\gamma(0))\in X$, since $\tilde{\gamma}$ and $\gamma$ coincide on $\Delta^\circ$. Hence $\pi_{2,1}(\tilde\gamma(0))\in E_2$. Similarly, we conclude that $\pi_{2,2}(\tilde{\gamma}(0))=\alpha$, so that $\alpha\in  \pi_{2,2}(C_{F,Y_2}\cap \pi_{2,1}^{-1}(E_2))$. 
The statement for $Q_{F,\alpha}$ follows from the same argument, since $\ord_t(\gamma^*(\pi_{1,1}^*(f_i))=\ord_t(\gamma^*(\pi_{2,1}^*(f_i))$ for all $i=1,\dots,p$.
\end{proof}
The following observation is immediate from the definitions.
\begin{lemma}\label{SFasQF}
The closed points of $S_F$ are given by those $\alpha \in \PP^{p-1}$ for which $Q_{F,\alpha}$ is non-empty, i.e.,
$S_F = \{\alpha\in\PP^{p-1}\mid Q_{F,\alpha}\not=\emptyset\}.$
\end{lemma}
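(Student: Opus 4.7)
The plan is to chain together Corollary~\ref{jets} and Definition~\ref{QF}, which between them already encode both directions of the desired equivalence. The work is purely a matter of unwinding definitions; no geometric input beyond what has already been set up is required.

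Concretely, I would first observe that by Corollary~\ref{jets}, the condition $\alpha \in S_F$ is equivalent to the existence of a morphism $\gamma\colon \Delta \to Y\times \PP^{p-1}$ with $\gamma(\Delta^\circ)\in C_F$ and $\gamma(0)\in E\times\{\alpha\}$. Next, I would note that Definition~\ref{QF} describes $Q_{F,\alpha}$ as the set of tuples $\ord_t(\gamma^*(\pi_1^*F))$ obtained from exactly such morphisms. Hence $Q_{F,\alpha}$ is non-empty if and only if at least one such morphism $\gamma$ exists, which by the previous step is equivalent to $\alpha \in S_F$. This directly yields the equality $S_F = \{\alpha \in \PP^{p-1}\mid Q_{F,\alpha}\neq \emptyset\}$.

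There is no real obstacle here; the only point to be careful about is that the $\gamma$ witnessing $\alpha \in S_F$ via Corollary~\ref{jets} is the same sort of arc that produces an element of $Q_{F,\alpha}$, so the two existential quantifiers match on the nose. In particular, given any such $\gamma$, the pullbacks $\gamma^*(\pi_1^* f_i)$ are elements of $\CC(\!(t)\!)^*$ (since $f_i$ is nowhere vanishing on $X$ and $\gamma(\Delta^\circ)\in C_F \subseteq X\times \PP^{p-1}$), so the $\ord_t$ are well-defined integers and the resulting tuple indeed lies in $\ZZ^p$. Thus the proof reduces to a two-line argument invoking Corollary~\ref{jets} and Definition~\ref{QF}, which is exactly why the authors flag the statement as being immediate from the definitions.
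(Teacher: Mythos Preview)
Your proposal is correct and matches the paper's approach exactly: the paper itself gives no proof beyond the remark that the observation is ``immediate from the definitions,'' and your argument via Corollary~\ref{jets} and Definition~\ref{QF} is precisely the intended unwinding. The extra care you take to note that $\gamma^*(\pi_1^*f_i)\in\CC(\!(t)\!)^*$ so that the orders are well-defined integers is a reasonable sanity check, though not strictly necessary here.
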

We recall that for a boundary component $E_j$ in a compactification of $X,$ we denote by $H_{E_j}$ the hyperplane $\left\{\ord_{E_j}(f_1)s_1+\dots+\ord_{E_j}(f_p)s_p=0\right\} $ in $\PP^{p-1}.$
\begin{proposition}\label{SFBound} 
Suppose that $j:X\hookrightarrow Y$ is a compactification with $Y$ smooth and $Y\setminus X$ a simple normal crossings divisor $E$ with irreducible components $E_1,\dots,E_q$. 
Suppose that $\gamma:\Delta \to Y\times\PP^{p-1}$ is such that $\gamma(\Delta^\circ)\in  C_F$ and 
$$\gamma(0) \in \bigcap_{j\in J}E_i\times \{\alpha\}$$
with $J\subseteq \{1,\dots,q\}$.
 Then $\alpha$ is contained in all hyperplanes $\{H_{E_j}\}_{j\in J}$, i.e.,\linebreak
\mbox{$\alpha\in \bigcap_{j\in J}H_{E_j}.$}
In particular, $S_F \subseteq \bigcup_{j=1}^q H_{E_j}$ and hence also the critical slopes of $F$ are among the $H_{E_j}$.
\end{proposition}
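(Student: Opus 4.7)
The plan is to argue \'etale-locally around the point $x_0 \coloneqq \pi_1(\gamma(0)) \in \bigcap_{j \in J} E_j$ using an SNC chart. By hypothesis, we can choose local coordinates $y_1, \dots, y_n$ on $Y$ centered at $x_0$ such that, after relabeling, $E_j = \{y_j = 0\}$ for each $j \in J$. Since each $f_k$ is regular and nowhere vanishing on $X$, the only possible zeros and poles of $f_k$ near $x_0$ lie along the $E_j$ with $j \in J$, so one can write
$$f_k = u_k \prod_{j \in J} y_j^{a_{k,j}}, \qquad a_{k,j} \coloneqq \ord_{E_j}(f_k),$$
with $u_k$ a unit in $\mathcal{O}_{Y, x_0}$. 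Substituting into $\omega_\alpha \coloneqq \sum_k \alpha_k\, df_k/f_k$ and expanding in the basis $dy_1, \dots, dy_n$ yields
$$\omega_\alpha = \sum_{\ell=1}^n A_\ell(y,\alpha)\, dy_\ell, \qquad A_\ell(y,\alpha) = R_\ell(y,\alpha) + \frac{c_\ell(\alpha)}{y_\ell} \text{ for } \ell \in J,$$
where $c_\ell(\alpha) \coloneqq \sum_k \alpha_k\, \ord_{E_\ell}(f_k)$ and $R_\ell$ is regular at $x_0$, while $A_\ell$ is already regular for $\ell \notin J$.

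Write $\gamma(t) = (x(t), \alpha(t))$ with $\alpha(t)$ a lift of the $\PP^{p-1}$-component to $\CC^p$ in a chosen affine chart about $\alpha$, so $\alpha(0) = \alpha$. The hypothesis $\gamma(\Delta^\circ) \subseteq C_F$ means $\omega_{\alpha(t)}$ vanishes at $x(t)$ for $t \in \Delta^\circ$, which forces $A_\ell(x(t), \alpha(t)) = 0$ in $\CC(\!(t)\!)$ for every $\ell$. Fix $j \in J$. Since $x_0 \in E_j$ but $x(t) \in X$ on $\Delta^\circ$, the formal series $y_j(x(t)) \in \CC\llbracket t \rrbracket$ is nonzero and satisfies $\ord_t(y_j(x(t))) \geq 1$. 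Clearing the $1/y_j$-denominator, the identity $A_j(x(t), \alpha(t)) = 0$ becomes
$$c_j(\alpha(t)) + y_j(x(t)) \cdot R_j(x(t), \alpha(t)) = 0 \qquad \text{in } \CC\llbracket t \rrbracket,$$
and evaluating at $t = 0$ kills the second summand and forces $c_j(\alpha) = 0$, i.e.\ $\alpha \in H_{E_j}$. As $j \in J$ was arbitrary, $\alpha \in \bigcap_{j \in J} H_{E_j}$.

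The remaining claims follow formally. By Corollary~\ref{jets} any $\alpha \in S_F$ arises from such a $\gamma$, and since $E = \bigcup_{i=1}^q E_i$ the point $\pi_1(\gamma(0))$ lies in at least one $E_i$, giving $\alpha \in H_{E_i}$ and hence $S_F \subseteq \bigcup_i H_{E_i}$. A codimension-one irreducible component of $S_F$ is an irreducible subvariety of $\PP^{p-1}$ of codimension one contained in a finite union of hyperplanes, and therefore must coincide with one of the $H_{E_i}$. The only mild subtlety in the proof is the lift of $\alpha(t)$ to $\CC^p$, which is unique only up to scaling; this is harmless because $\omega_{c\alpha} = c\,\omega_\alpha$ makes the critical-point condition projectively invariant, so the choice of affine chart around $\alpha$ does not affect the conclusion. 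Beyond setting up the SNC chart and the local lift cleanly, the argument is a direct residue computation; no essential obstacle is anticipated.
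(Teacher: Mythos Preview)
Your proof is correct and rests on the same local SNC computation as the paper's: write $\dlog f^\alpha$ in coordinates adapted to the boundary and isolate the polar contribution $c_j(\alpha)\,dy_j/y_j$ along each $E_j$. The difference is only in packaging. You pull back the coefficient functions $A_\ell$ directly along the arc $\gamma$, clear the denominator, and evaluate at $t=0$. The paper instead introduces the \emph{logarithmic critical locus} $C_{F,Y}(\log E)\subseteq Y\times\PP^{p-1}$, defined as the vanishing locus of $\dlog f^\alpha$ regarded as a global section of the locally free sheaf $\Omega^1_Y(\log E)$; since $C_{F,Y}\subseteq C_{F,Y}(\log E)$ and the latter's intersection with $\pi_1^{-1}(\cap_{j\in J}E_j)$ is read off from the local frame $dy_1/y_1,\dots,dy_r/y_r,dy_{r+1},\dots,dy_n$ to lie over $\cap_{j\in J}H_{E_j}$, the conclusion follows without tracking the arc explicitly. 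Your route is slightly leaner for this statement alone; the paper's construction, however, is reused throughout Section~\ref{section MLEveryaffine} (Lemma~\ref{huhLemma}, Corollary~\ref{qfa}, Lemma~\ref{id}, Corollary~\ref{maincor}), so introducing $C_{F,Y}(\log E)$ here is an investment in later machinery rather than a detour.

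One minor imprecision: when you assert that ``the only possible zeros and poles of $f_k$ near $x_0$ lie along the $E_j$ with $j\in J$,'' you are tacitly assuming $J$ is the maximal index set with $x_0\in\bigcap_{j\in J}E_j$. This is harmless---either enlarge $J$ to the maximal such set, or include all boundary components through $x_0$ in your SNC chart; your residue computation for indices $\ell\in J$ goes through unchanged either way.
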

\begin{proof}
Since $E$ is a SNC divisor, the locally free sheaf $\Omega_X^1$ on $X$ extends to the locally free sheaf $\Omega_Y^1(\log E)\subseteq j_*\Omega_X^1$ on $Y$, for which $\Omega_Y^1(\log(E))|_X= \Omega_X^1$. It can be verified in local coordinates that for all $\alpha\in\CC^p$, the form $\dlog f^\alpha\in H^0(Y, j_*\Omega_X^1)$ lies in $H^0(Y,\Omega_Y^1(\log E))$.
We then have the following commutative diagram:
%\begin{figure*}[h]
%\hspace*{2cm}	\includegraphics[width=12.3cm]{commdiag}
%\end{figure*}
\[
\begin{tikzcd}
\CC^p\arrow[r]\arrow[dr] &H^0\left(Y,\Omega_Y^1\left(\log E\right)\right)\arrow[d]\\
& H^0\left(X,\Omega_Y^1\left(\log E\right)\right)=H^0\left(X,\Omega_X^1\right)
\end{tikzcd}
\]
The horizontal arrow defines the incidence variety
$$C_{F,Y}(\log E)  \coloneqq \left\{(y,\alpha)\mid \dlog f^\alpha(y)=0\right\}\subseteq Y\times \mathbb{P}^{p-1},$$
where $ \dlog f^\alpha$ is regarded as a global section of $\Omega_Y^1(\log E)$, and hence $\dlog f^\alpha(y)$ is an element of the fiber of this locally free sheaf over $y$. We refer to $C_{F,Y}(\log E)$ as the {\em logarithmic critical locus of $F$}.
This is a closed, possibly reducible, subvariety of $Y\times\mathbb{P}^{p-1}$. By the commutativity of the diagram above, $C_F\subseteq C_{F,Y}(\log E)$ and hence also $C_F \subseteq C_{F,Y} \subseteq C_{F,Y}(\log E)$.

Let $y\in \bigcap_{j\in J} E_j$. Assume for simplicity of notation that $J=\{1,\dots,r\}$. Take local coordinates $y_1,\dots,y_n$ on a small open $U$ around $y$ in which $E_i\cap U=\{y_i=0\}$ for $i=1,\dots,r$. Then in these coordinates
$$\frac{df_i}{f_i} = \frac{d(u_i y_1^{\ord_{E_1}(f_i)}\cdots y_r^{\ord_{E_r}(f_i)})}{u_i y_1^{\ord_{E_1}(f_i)}\cdots y_r^{\ord_{E_r}(f_i)}}=\frac{du_i}{u_i}+\ord_{E_1}(f_i)\frac{dy_1}{y_1}+\dots +\ord_{E_r}(f_i)\frac{dy_r}{y_r},$$
where the $u_i$ are non-vanishing functions on $U$. Summing up, we  conclude that
\begin{align}\label{dloglocal}
\dlog f^\alpha = \theta(\alpha)+\sum_{j=1}^r \frac{(\ord_{E_j}(f_1)\alpha_1+\dots+\ord_{E_j}(f_p)\alpha_p)dy_j}{y_j},
\end{align}
where $\theta(\alpha)$ is a regular $1$-form around $y$, which we write in coordinates as $\sum_{i=1}^n \theta_i(\alpha)dy_i$. We will denote $q_j(\alpha)=\ord_{E_j}(f_1)\alpha_1+\dots+\ord_{E_j}(f_p)\alpha_p$. In the frame $dy_1/y_1,\ldots,dy_r/y_r,$ $dy_{r+1},\dots,dy_n$ for $\Omega^1_Y(\log E)$ we conclude that $\dlog f^\alpha$ is given by \linebreak$y_1\theta_1(\alpha)+q_1,\dots,y_r\theta_r(\alpha)+q_r,\theta_{r+1}(\alpha),\dots,\theta_n(\alpha)$, and thus these are a system of defining equations for $C_{F,Y}(\log E)\cap \pi_1^{-1}(U)$. It follows that 
\begin{align}\begin{split}
C_{F,Y}(\log E)\cap \pi_1^{-1}(\cap_{j\in J}E_j\cap U)&=V(y_1,\dots,y_r,q_1,\dots,q_r,\theta_{r+1},\dots,\theta_n)\label{CFYLBound}\\
& \subseteq \left(\cap_{j\in J}E_j\cap U\right)\times \left(\cap_{j\in J}H_{E_j}\right).
\end{split}
\end{align}
Since $y\in  \bigcap_{j\in J} E_j$ was arbitrary, we conclude that 
$$C_{F,Y}\left(\log E\right)\cap \pi_1^{-1}\left(\cap_{j\in J}E_j\right) \subseteq \left(\cap_{j\in J}E_j\right)\times \left(\cap_{j\in J}H_{E_j}\right)$$
and thus also
$$C_{F,Y}\cap \pi_1^{-1}\left(\cap_{j\in J}E_j\right) \subseteq \left(\cap_{j\in J}E_j\right)\times \left(\cap_{j\in J}H_{E_j}\right).$$
For a morphism $\gamma:\Delta\to Y\times\PP^{p-1}$ as in the statement of the proposition, we thus conclude that
\begin{align*}
\gamma(0) \in C_{F,Y}\cap \left(\cap_{j\in J}E_j\times\{\alpha\}\right) &=C_{F,Y}\cap \pi_1^{-1}\left(\cap_{j\in J}E_j\right)\cap \pi_2^{-1}(\alpha)\\
&\subseteq \pi_1^{-1}\left(\cap_{j\in J}E_j\right)\times \left(\cap_{j\in J}H_{E_j}\cap \{\alpha\}\right).
\end{align*}
In particular, $(\cap_{j\in J}H_{E_j})\cap \{\alpha\}$ is non-empty so that indeed $\alpha\in \cap_{j\in J}H_{E_j}$.
The last claim of the lemma follows from the first statement and Lemma~\ref{jets}. 
\end{proof}
\begin{remark}
In the above proof, the reason we work with $C_{F,Y}(\log E)$ is simply to give a bound for $C_{F,Y}$. The upshot of $C_{F,Y}(\log E)$ is that $C_{F,Y}(\log E)\cap \pi_1^{-1}(\cap_{j\in J}E_j\cap U)$ is easy to compute, as in Equation~\eqref{CFYLBound}. On the other hand, computing $C_{F,Y}\cap \pi_1^{-1}(\cap_{j\in J}E_j\cap U)$, which is our actual goal, is much more difficult. The reason for this difference is essentially that $C_{F,Y}$ is equal to $\overline{C_{F,Y}(\log E)\setminus \pi_1^{-1}(E)}$, and hence is defined by a saturation of the ideal defining $C_{F,Y}(\log E)$. The ideal defining $C_{F,Y}(\log E)$ can be written down fairly explicitly as in the proof above, but we do not have a general method to analyze the saturation.

We illustrate with an example that in general there is a difference between $C_{F,Y}$ and $C_{F,Y}(\log E)$. Consider the variety $U=\CC^2\setminus V(f_1f_2)$, with $f_1=x$ and $ f_2=xy-1.$ $U$~can be compactified in $\PP^2$, with boundary components $V(x),$  $V(xy-z^2),$ and $V(z)$. We can perform blowups with exceptional locus lying over $V(z)$ to make the boundary~$E$ SNC. On can compute that 
$$C_{F,Y}\cap \pi_1^{-1}(\CC^2)=\{y=0\}\times \{s_2=0\},$$
while
$$C_{F,Y}(\log E)\cap \pi_1^{-1}(\CC^2)=\{y=0\}\times \{s_2=0\}\cup \{x=0\}\times\{s_2=0\}.$$
\end{remark}

\section{Maximum likelihood estimation on sch\"on very affine varieties}\label{section MLEveryaffine}
In this section, we investigate {\em sch\"on} very affine varieties. Those varieties allow for a smooth compactification with SNC boundary obtained from  combinatorial data, namely from their tropical variety. For background in Tropical Geometry and tropical compactifications, we refer the reader to \cite{MS15}, \cite{Tev}, and~\cite{OnTropComp}. In particular, we refer to \cite[Definitions 3.1.1 and 3.2.1]{MS15} for the definition of the tropical variety and to \cite[Theorem 3.2.3]{MS15} for equivalent characterizations.
We analyze the MLE problem in terms of Tropical Geometry. The main results of this section are Theorem~\ref{mainthm} and Proposition~\ref{main} which completely determine the critical slopes from tropical~data. 

\subsection{Zeroes of logarithmic differential forms}\label{subsection zeroeslog}
Let $X\subseteq (\CC^*)^p$ be a smooth closed subvariety of the algebraic $p$-torus. We denote by $t_1,\dots,t_p$ the coordinate functions on $(\CC^*)^p$ and by $f_i\coloneqq t_i|_X$ their restrictions to~$X$. We will assume that there exists a fan structure $\Sigma$ on $\Trop(X)$ such that the closure $X^{\Sigma}$ of $X$ in the associated toric variety $ \TT^\Sigma $ is proper and smooth and $X^{\Sigma}\setminus X=X^\Sigma\cap (\TT^\Sigma\setminus (\CC^*)^p)$ is a reduced simple normal crossings divisor. If $X$ is sch\"on, it admits such a compactification  (see proof of \cite[Theorem 2.5]{HomTropVar}). We denote by~$\{\tau_i\}_{i\in I}$ the rays in $\Sigma$ and we denote the primitive ray generator of the ray $\tau$ by $v_\tau\in \ZZ^p$.  

The irreducible components of the boundary $E=X^\Sigma\setminus X$ are partitioned by the rays in~$\Sigma$. Those  corresponding to $\tau$ are the irreducible components $\{E_{\tau,i}\}$ of $E_\tau \coloneqq X^{\Sigma}\cap \overline{\mathcal{O}_\tau}$, where $\mathcal{O}_\tau \subseteq \mathbb{T}^\Sigma$ is the locally closed torus orbit corresponding to $\tau$. Recall that an alternative characterization of sch\"on very affine varieties is as those closed subvarieties of a torus for which there is a fan structure $\Sigma$ on $\Trop(X)$ such that the multiplication map $m:X^\Sigma\times (\CC^*)^p\to \TT^\Sigma$ is smooth. In particular, it follows from this characterization that every $E_\tau$ is smooth, and hence if $E_\tau$ is not irreducible, it is a disjoint union of smooth irreducible components. 

By construction of $\TT^\Sigma$ we have that $\ord_{\overline{\mathcal{O}_{\tau}}}(t_j)=v_{\tau_j}$. Since $E_{\tau}$ is reduced by assumption, it follows that also for every component $E_{\tau,i}\subseteq E_\tau$, $\ord_{E_{\tau,i}}(f_j)=(v_\tau)_j$. In particular, for each such $E_{\tau,i}$, $H_{E_{\tau,i}}$ equals $\PP(\tau^\perp)$, the hyperplane orthogonal to $\tau$. 

Recall that for each $\alpha\in \CC^p$, the form $\dlog f^\alpha$ extends to a global section of $\Omega_{X^{\Sigma}}^1(\log E)$, i.e., a differential one-form on $X^{\Sigma}$ with logarithmic poles along $E$. As in the proof of Proposition~\ref{SFBound}, we denote by~$C_{F,X^{\Sigma}}(\log E) $ the  logarithmic critical locus of $F,$ i.e.,
$$C_{F,X^{\Sigma}}\left(\log E\right) = \left\{(x,\alpha)\mid \dlog f^\alpha(x)=0\right \} \subseteq X^{\Sigma}\times \PP^{p-1}.$$
As also stated there, we have have the following containments:
$$C_F \subseteq C_{F,X^{\Sigma}} = \overline{C_{F,X^\Sigma}} \subseteq C_{F,X^{\Sigma}}(\log E).$$
A priori, $C_{F,X^{\Sigma}}(\log E)$ can have irreducible components that are contained in $\pi_1^{-1}(E)$. However, the following lemma assures that in this situation this cannot be the case.

\begin{lemma}[\cite{huh_2013}]\label{huhLemma} 
The logarithmic critical locus $C_{F,X^{\Sigma}}(\log E)$ is smooth and irreducible. Thus $C_{F,X^{\Sigma}}(\log E)$ equals $C_{F,X^\Sigma}$.
\end{lemma}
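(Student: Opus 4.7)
The plan is to realize the logarithmic critical locus $C_{F,X^\Sigma}(\log E)$ as the projectivization of a locally free subsheaf of $\mathcal{O}_{X^\Sigma}^p$, which will immediately deliver smoothness and irreducibility, and then to compare dimensions to upgrade this to equality with $C_{F,X^\Sigma}$. Concretely, I would consider the $\mathcal{O}_{X^\Sigma}$-linear morphism
$$\psi\colon \mathcal{O}_{X^\Sigma}^{p} \longrightarrow \Omega^1_{X^\Sigma}(\log E), \qquad (\alpha_1,\dots,\alpha_p) \longmapsto \sum_{i=1}^p \alpha_i\,\frac{df_i}{f_i},$$
obtained by restricting to $X^\Sigma$ the canonical trivialization $\Omega^1_{\TT^\Sigma}(\log\partial)\cong\mathcal{O}_{\TT^\Sigma}^{\,p}$ given by the invariant forms $dt_i/t_i$ on the smooth toric variety $\TT^\Sigma$.

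The first step, and the place where the schön hypothesis is really used, is to show that $\psi$ is surjective. For this I would invoke the equivalent characterization of schönness recalled just before the lemma, namely that the multiplication map $m\colon X^\Sigma\times (\CC^*)^p\to \TT^\Sigma$ is smooth. Smoothness of $m$, combined with the short exact sequence of logarithmic differentials associated with $m$ and the triviality of $\Omega^1_{\TT^\Sigma}(\log\partial)$, translates into surjectivity of $j^*\Omega^1_{\TT^\Sigma}(\log\partial)\twoheadrightarrow \Omega^1_{X^\Sigma}(\log E)$, which is exactly $\psi$. I expect this translation from smoothness of $m$ to surjectivity of $\psi$ to be the main delicate step, and it is the ingredient where one really leans on Huh's work cited in the lemma.

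Given surjectivity, set $K\coloneqq \ker\psi$. Since $\Omega^1_{X^\Sigma}(\log E)$ is locally free of rank $\dim X$ on the smooth variety $X^\Sigma$, $K$ is locally free of rank $p-\dim X$. As a subsheaf of the trivial bundle, its projectivization $\PP(K)$ embeds into $X^\Sigma\times\PP^{p-1}$ as the locus of pairs $(x,[\alpha])$ with $\alpha\in K_x$, i.e., with $\dlog f^\alpha(x)=0$ as a section of $\Omega^1_{X^\Sigma}(\log E)$. Hence $\PP(K)=C_{F,X^\Sigma}(\log E)$, and being a projective bundle over the smooth irreducible base $X^\Sigma$ it is automatically smooth and irreducible of dimension $\dim X+(p-\dim X)-1=p-1$.

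To deduce the equality $C_{F,X^\Sigma}(\log E)=C_{F,X^\Sigma}$, note that the inclusion $C_{F,X^\Sigma}\subseteq C_{F,X^\Sigma}(\log E)$ follows from $C_F\subseteq C_{F,X^\Sigma}(\log E)$ together with closedness of the right-hand side. Restricting the projective bundle description to $X$ realises $C_F=\PP(K|_X)$ as an irreducible variety of dimension $p-1$, so $C_{F,X^\Sigma}=\overline{C_F}$ is an irreducible closed subvariety of $C_{F,X^\Sigma}(\log E)$ of the same dimension. By irreducibility of $C_{F,X^\Sigma}(\log E)$, the two varieties must coincide.
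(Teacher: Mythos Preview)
Your proposal is correct and follows essentially the same approach as the paper: both identify $C_{F,X^\Sigma}(\log E)$ with the projectivization of the kernel of the surjection $\mathcal{O}_{X^\Sigma}^p\to\Omega^1_{X^\Sigma}(\log E)$, $e_i\mapsto \dlog f^{e_i}$, and deduce smoothness and irreducibility from this projective-bundle description. You supply more detail than the paper on why surjectivity holds (via smoothness of $m$) and on the final ``thus'' clause, but the architecture is the same.
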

\begin{proof}
The morphism $\Omega_{ \TT^\Sigma }^1(\log(  \TT^\Sigma \setminus \TT))|_{X^\Sigma}\to\Omega_{X^\Sigma}^1(\log E)$ is surjective. This morphism is identified with the morphism $\mathcal{O}_{X^{\Sigma}}^p\to\Omega_{X^{\Sigma}}^1(\log E)$ mapping the $i$th generator to $\dlog f^{e_i}$. It follows that the kernel of this morphism has constant rank equal to $p-n$ and thus is locally free. Since $C_{F,X^{\Sigma}}(\log E)$ is the projectivization of the total space of the vector bundle corresponding to the kernel, it is smooth and irreducible.
\end{proof}
\begin{remark}
These properties of the asymptotic critical locus are not intrinsic to $X$, but depend on the chosen compactification. For example, in \cite{cohen_denham_falk_varchenko_2011}, the authors investigate the singularities of $C_{F,\PP^{n}}$ for hyperplane arrangement complements that are not closed in a tropical compactification but are closed in projective space.
\end{remark}
\begin{corollary}\label{qfa}
Let $(x,\alpha)\in C_{F,X^{\Sigma}}(\log E)\cap \pi_1^{-1}(\cap_{j\in J}E_{\tau_j})$ with $J$ maximal. Then the sum of the primitive ray generators $\{v_{\tau_j}\}_{j\in J} $ lies in $Q_{F,\alpha}$, i.e., $\sum_{j\in J}v_{\tau_j}\in Q_{F,\alpha}$.
\end{corollary}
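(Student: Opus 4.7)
The plan is to construct a formal arc $\gamma\colon\Delta\to X^\Sigma\times\PP^{p-1}$ factoring through $C_{F,X^\Sigma}(\log E)$, with $\gamma(0)=(x,\alpha)$, lying in $C_F$ on the generic fiber, and meeting each boundary divisor $\pi_1^{-1}(E_{\tau_j})$ ($j\in J$) transversally. By Lemma~\ref{independence} I may compute $Q_{F,\alpha}$ using the compactification $X^\Sigma$. Because $E$ is SNC and $J=\{1,\dots,r\}$ is maximal, I choose a neighborhood $U$ of $x$ with local coordinates $y_1,\dots,y_n$ such that $E_{\tau_j}\cap U=\{y_j=0\}$ for $j\in J$, while $U$ avoids all other boundary components. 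Since $E_\tau$ is reduced and $\ord_{E_{\tau_j}}(f_i)=(v_{\tau_j})_i$, each $f_i$ admits the local factorization $f_i=u_i\prod_{j\in J}y_j^{(v_{\tau_j})_i}$ with $u_i$ invertible on $U$.

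The main step is the construction of $\gamma$. Lemma~\ref{huhLemma} identifies $C_{F,X^\Sigma}(\log E)=C_{F,X^\Sigma}$ with the projectivization of a vector bundle on $X^\Sigma$; in particular, it is smooth, irreducible, and $\pi_1$ is surjective. For each $j\in J$, the divisor $D_j\coloneqq\pi_1^{-1}(E_{\tau_j})\cap C_{F,X^\Sigma}(\log E)$ is locally cut out by the single function $y_j\circ\pi_1$; by irreducibility of $C_{F,X^\Sigma}(\log E)$ together with the fact that $E_{\tau_j}\subsetneq X^\Sigma$, the divisor $D_j$ is proper, so its tangent space $T_j\subseteq T\coloneqq T_{(x,\alpha)}C_{F,X^\Sigma}(\log E)$ is a hyperplane. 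Since finitely many hyperplanes cannot exhaust $T$, I can pick $v\in T\setminus\bigcup_{j\in J}T_j$, so that $dy_j(v)\neq 0$ for every $j\in J$. Using smoothness of $C_{F,X^\Sigma}(\log E)$, I lift $v$ to a formal arc $\gamma\colon\Delta\to C_{F,X^\Sigma}(\log E)$ with $\gamma(0)=(x,\alpha)$ and initial derivative $v$; by construction, $\ord_t(\gamma^* y_j)=1$ for each $j\in J$.

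To conclude, for $k\notin J$ the arc $\gamma$ stays in the formal neighborhood of $x$ where $y_k$ is a unit, so it does not meet $\pi_1^{-1}(E_{\tau_k})$; for $j\in J$, $\gamma^* y_j$ is nonzero on $\Delta^\circ$, so the arc avoids $\pi_1^{-1}(E_{\tau_j})$ there. Hence $\gamma(\Delta^\circ)\subseteq C_F$, and the local factorization of the $f_i$ gives
\[
\ord_t\bigl(\gamma^*(\pi_1^*f_i)\bigr)=\ord_t(\gamma^*u_i)+\sum_{j\in J}(v_{\tau_j})_i\cdot\ord_t(\gamma^* y_j)=\sum_{j\in J}(v_{\tau_j})_i,
\]
so $\ord_t(\gamma^*(\pi_1^*F))=\sum_{j\in J}v_{\tau_j}$ lies in $Q_{F,\alpha}$. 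The most delicate step in this plan is showing that each $T_j$ has codimension exactly one, which reduces to properness of $D_j$ inside the irreducible variety $C_{F,X^\Sigma}(\log E)$; this in turn relies on Lemma~\ref{huhLemma} and on surjectivity of $\pi_1$ onto $X^\Sigma$.
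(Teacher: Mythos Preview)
Your argument is correct and takes a somewhat different route from the paper. The paper exploits more directly the conclusion of Lemma~\ref{huhLemma} that $C_{F,X^\Sigma}(\log E)$ is a $\PP^{p-n-1}$-bundle over $X^\Sigma$: after choosing a trivialization $\Psi\colon C_{F,X^\Sigma}(\log E)\cap\pi_1^{-1}(U)\cong U\times\PP^{p-n-1}$ sending $(x,\alpha)$ to $(x,q)$, it simply writes down the explicit arc $\gamma(t)=\Psi^{-1}((t,\dots,t,0,\dots,0,q))$, moving linearly in the base coordinates $y_1,\dots,y_r$ while holding the fiber coordinate $q$ fixed. The transversality $\ord_t(\gamma^*y_j)=1$ is then immediate by construction. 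Your approach instead extracts only smoothness of the total space and of the projection, and manufactures the arc by a tangent-space argument: choose a tangent vector avoiding the hyperplanes $T_j=\ker d(y_j\circ\pi_1)$ and lift it to a formal arc. Both arguments rest on the bundle structure from Lemma~\ref{huhLemma}; the paper's is shorter and more explicit, while yours is more portable to settings where one has a smooth surjection but no convenient global trivialization.

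One step in your write-up should be sharpened. You infer that $T_j$ is a hyperplane in $T$ from the fact that $D_j$ is a \emph{proper} subvariety cut out by one equation, but properness alone does not exclude $d(y_j\circ\pi_1)_{(x,\alpha)}=0$ (in which case $T_j=T$). What you actually need is that $\pi_1$ is a \emph{smooth morphism}---which does follow from the bundle structure---so that $d\pi_1$ is surjective on tangent spaces; combined with $dy_j|_x\neq 0$ this yields $d(y_j\circ\pi_1)_{(x,\alpha)}\neq 0$ and hence $\operatorname{codim} T_j=1$. Your closing sentence invokes ``surjectivity of $\pi_1$ onto $X^\Sigma$'', but it is submersivity of $\pi_1$, not set-theoretic surjectivity, that is doing the work at this point.
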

\begin{proof}
By Lemma~\ref{huhLemma}, $C_{F,X^\Sigma}(\log E)$ is equal to $\overline{C_F}$, and $C_{F,X^\Sigma}(\log E)$ is a projective space bundle over $X^\Sigma$. Assume for the sake of notation that $J=\{1,\dots, r\}$. Take local coordinates $y_1,\dots,y_n$ on $U\subseteq X^\Sigma$ centered at $x$ in which $E_i\cap U=\{y_i=0\}$ for $i=1,\ldots,r.$ Take a trivialization of $C_{F,X^\Sigma}(\log E)$ over $U$ so that 
$$\Psi:C_{F,X^\Sigma}(\log E)\cap \pi_1^{-1}(U)\cong U\times \PP^{p-n-1}.$$
Denote the image of $(x,\alpha)$ under $\Psi$ by $(x,q)$, $q\in \PP^{p-n-1}$. Consider then the curve
$$\gamma:\Delta\to C_{F,X^\Sigma}(\log E), t\mapsto \Psi^{-1}((\underbrace{t,\dots,t}_{r\text{ times}},\underbrace{0,\dots,0}_{n-r\text{ times}},q)).$$
By construction, $\gamma(0)=(x,\alpha)$. It follows immediately from the definition of this $\gamma$ that $\ord_t(\gamma^*(\pi_1^*(f_i)))=\sum_{j\in J}\ord_{E_j}(f_i)=\sum_{j\in J}(v_{\tau_j})_i$, which proves the claim.
\end{proof}

Let $\alpha\in H_{E_{\tau}}$. The local expression for the logarithmic differential form $\dlog f^\alpha$ that we computed in Equation~\ref{dloglocal} shows that $\dlog f^\alpha$ has residue $0$ along every irreducible component of~$E_{\tau}$. Hence we can restrict $\dlog f^\alpha$ to $E_{\tau}$ to obtain
$$\dlog f^\alpha|_{E_\tau} \in H^0\left(E_\tau, \Omega_{E_\tau}^1\left(\log \left(\left(E\setminus E_{\tau}^\circ\right)\cap E_{\tau}\right)\right)\right).$$
We define
$$C_{F,X^\Sigma,E_\tau} \coloneqq \left\{(x,\alpha)\in E_\tau\times H_{E_\tau}\mid \dlog f^\alpha|_{E_\tau}(x)=0\right\}\subseteq E_\tau\times H_{E_\tau}.$$
By a local computation, one obtains the following description of $C_{F,X^\Sigma,E_\tau}$.
\begin{lemma}\label{id}
Considering $C_{F,X^\Sigma,E_\tau}$ as a subvariety of $X^\Sigma \times\PP^{p-1}$ via the inclusion\linebreak \mbox{$E_\tau\times H_{E_\tau}\hookrightarrow X^\Sigma \times\PP^{p-1}$}, $C_{F,X^\Sigma,E_\tau}$ coincides with $C_{F,X^\Sigma }(\log E)\cap \pi_1^{-1}(E_\tau)$. 
\end{lemma}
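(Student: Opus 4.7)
The plan is a purely local computation in the coordinates already set up in the proof of Proposition~\ref{SFBound}, comparing the defining equations of the two subvarieties term by term. The assertion is pointwise in $E_\tau$, so I would fix a closed point $x$ on some component $E_{\tau,i}$ of $E_\tau$ and work in a chart $U$ of $X^\Sigma$ centered at $x$ with coordinates $y_1, \ldots, y_n$ in which $E_{\tau,i} = \{y_1 = 0\}$ and the other boundary components passing through $x$ are $\{y_j = 0\}$ for $j = 2, \ldots, r$.

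Next I would recall from Equation~\eqref{dloglocal} the local description of $\dlog f^\alpha$ in the trivializing frame $dy_1/y_1, \ldots, dy_r/y_r, dy_{r+1}, \ldots, dy_n$ of $\Omega^1_{X^\Sigma}(\log E)|_U$: its coordinates are $y_j\theta_j(\alpha) + q_j(\alpha)$ for $j \leq r$ and $\theta_i(\alpha)$ for $i > r$, where $q_j(\alpha) = \sum_k \ord_{E_j}(f_k)\,\alpha_k$. Intersecting $C_{F,X^\Sigma}(\log E)$ with $\pi_1^{-1}(E_\tau \cap U) = V(y_1)\times \PP^{p-1}$ collapses the first coordinate $y_1\theta_1(\alpha) + q_1(\alpha)$ to $q_1(\alpha)$, which is exactly the defining equation of $H_{E_\tau} = \PP(\tau^\perp)$ inside $\PP^{p-1}$; the remaining equations become $y_j\theta_j(\alpha)|_{y_1 = 0} + q_j(\alpha) = 0$ for $2 \leq j \leq r$ and $\theta_i(\alpha)|_{y_1 = 0} = 0$ for $r < i \leq n$.

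On the other side, for $\alpha \in H_{E_\tau}$ the condition $q_1(\alpha) = 0$ is precisely the vanishing of the residue of $\dlog f^\alpha$ along $E_\tau$, so the restriction $\dlog f^\alpha|_{E_\tau}$ lies in $\Omega^1_{E_\tau}(\log((E \setminus E_\tau^\circ)\cap E_\tau))$ and on $E_\tau \cap U$ reads
$$\dlog f^\alpha|_{E_\tau\cap U} \;=\; \sum_{j=2}^{r} q_j(\alpha)\,\frac{dy_j}{y_j} \;+\; \sum_{i=r+1}^{n} \theta_i(\alpha)|_{y_1 = 0}\,dy_i.$$
Its vanishing, together with the hyperplane condition $\alpha\in H_{E_\tau}$, gives exactly the same list of equations recorded above. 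Transporting $C_{F,X^\Sigma, E_\tau}$ to $X^\Sigma\times \PP^{p-1}$ via $E_\tau\times H_{E_\tau}\hookrightarrow X^\Sigma\times\PP^{p-1}$ therefore yields the same scheme-theoretic intersection inside $\pi_1^{-1}(U)$, and since $x$ was arbitrary the two subvarieties coincide globally.

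There is no genuine obstacle here; the only subtlety is bookkeeping when $E_\tau$ is reducible. One must note that $H_{E_{\tau,i}} = \PP(\tau^\perp)$ is the same for every irreducible component of $E_\tau$, a consequence of the reduced SNC assumption together with $\ord_{E_{\tau,i}}(f_j) = (v_\tau)_j$ as recalled in the preceding subsection. This is precisely what legitimizes the notation $H_{E_\tau}$ and allows a single local argument around each $x\in E_\tau$ to cover all components simultaneously.
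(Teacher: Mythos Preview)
Your approach is exactly the paper's approach: a local comparison of defining equations in the chart provided by Equation~\eqref{dloglocal}. However, your displayed formula for the restricted form is missing a term. Recall that in Equation~\eqref{dloglocal} the regular part $\theta(\alpha)=\sum_{i=1}^n\theta_i(\alpha)\,dy_i$ has summands for \emph{all} indices $1\le i\le n$, not just $i>r$. Restricting to $\{y_1=0\}$ kills only $\theta_1(\alpha)\,dy_1$, so
\[
\dlog f^\alpha|_{E_\tau\cap U}
\;=\;\sum_{j=2}^{r}\theta_j(\alpha)|_{y_1=0}\,dy_j
\;+\;\sum_{j=2}^{r} q_j(\alpha)\,\frac{dy_j}{y_j}
\;+\;\sum_{i=r+1}^{n}\theta_i(\alpha)|_{y_1=0}\,dy_i,
\]
i.e.\ in the logarithmic frame $dy_2/y_2,\dots,dy_r/y_r,dy_{r+1},\dots,dy_n$ the coefficients are $y_j\theta_j(\alpha)|_{y_1=0}+q_j(\alpha)$ for $2\le j\le r$ and $\theta_i(\alpha)|_{y_1=0}$ for $i>r$. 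With \emph{this} expression the vanishing of $\dlog f^\alpha|_{E_\tau}$ together with $q_1(\alpha)=0$ indeed recovers the same equations you obtained on the $C_{F,X^\Sigma}(\log E)\cap\pi_1^{-1}(E_\tau)$ side.

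With your formula as written, the $dy_j/y_j$-coefficients for $2\le j\le r$ would be just $q_j(\alpha)$, and their vanishing would \emph{not} match $y_j\theta_j(\alpha)|_{y_1=0}+q_j(\alpha)=0$; the two varieties would then differ at points of $E_\tau$ meeting other boundary components. Once the missing $\theta_j\,dy_j$ terms are reinstated, your argument is correct and coincides with the paper's.
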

\begin{proof}
We work around a point $y\in\bigcap_{i=1}^k E_i$, with $E_\tau=E_1$, and such that $y$ is not contained in any other boundary component. We take coordinates $y_1,\dots,y_n$ on a small open $U$ around $y$ such that $E_i\cap U=\{y_i=0\}$ for $i=1,\dots,k$. The local expression for $\dlog f^\alpha$ from Equation~\ref{dloglocal} gives
$$\dlog f^\alpha=\sum_{i=1}^k \frac{y_i\theta_i(\alpha)+q_i}{y_i}dy_i+\sum_{i=k+1}^n\theta_i(\alpha)dy_i.$$
On the other hand, $y_2,\dots,y_n$ are coordinates on $E_\tau\cap U$, and the form $\dlog f^\alpha|_{E_\tau}$ has the local expression
$$\dlog f^\alpha|_{E_\tau} =\sum_{i=2}^k \frac{y_i\theta_i(\alpha)+q_i}{y_i}dy_i+\sum_{i=k+1}^n\theta_i(\alpha)dy_i.$$
This shows that $C_{F,X^\Sigma,E_\tau}$ and $C_{F,X^\Sigma }(\log E)\cap \pi_1^{-1}(E_\tau)$ are defined by the same equations.
\end{proof}
We will denote $E_\tau^\circ:= E_\tau\setminus \bigcup_{\tau'\not=\tau}E_{\tau'}=X^{\Sigma}\cap \mathcal{O}_\tau$. 
\begin{corollary}\label{maincor}
Let $\alpha\in H_{E_{\tau}}$. If $\dlog f^\alpha|_{E_\tau}$ has a zero on $E_\tau^\circ$, then $v_\tau\in Q_{F,\alpha}$.
\end{corollary}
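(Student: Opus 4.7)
The plan is to deduce this corollary as a direct consequence of Lemma~\ref{id} together with Corollary~\ref{qfa}. First I would translate the hypothesis into the language of the logarithmic critical locus: the existence of $x\in E_\tau^\circ$ with $\dlog f^\alpha|_{E_\tau}(x)=0$ says that $(x,\alpha)\in C_{F,X^\Sigma,E_\tau}$, and by Lemma~\ref{id} this is exactly the statement that $(x,\alpha)\in C_{F,X^\Sigma}(\log E)\cap \pi_1^{-1}(E_\tau^\circ)$.

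Next I would identify, in the SNC boundary $E$, precisely which irreducible components pass through $x$. Since $X$ is sch\"on, $E_\tau$ is smooth, so its irreducible components $\{E_{\tau,i}\}$ are pairwise disjoint; therefore $x$ lies in a unique component $E_{\tau,i_0}$ of $E_\tau$. The condition $x\in E_\tau^\circ$ rules out membership in any $E_{\tau'}$ for $\tau'\neq\tau$. Thus $E_{\tau,i_0}$ is the unique irreducible boundary component of $X^\Sigma$ containing $x$. From the discussion just before Lemma~\ref{huhLemma} we have $\ord_{E_{\tau,i_0}}(f_j) = (v_\tau)_j$ for all $j$.

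Finally I apply Corollary~\ref{qfa} to the point $(x,\alpha)$ with maximal index set $J=\{i_0\}$ (a singleton by the previous step). The corollary produces a morphism $\gamma\colon \Delta\to X^\Sigma\times\PP^{p-1}$ with $\gamma(\Delta^\circ)\subseteq C_F$ and $\gamma(0)=(x,\alpha)\in E\times\{\alpha\}$, and the vector $\ord_t(\gamma^*\pi_1^*F)$ equals $\sum_{j\in J}v_{\tau_j}=v_\tau$. By the very definition of $Q_{F,\alpha}$ this gives $v_\tau\in Q_{F,\alpha}$, as desired. There is no serious obstacle here: the work has been carried out in Lemma~\ref{id} (equating the two critical loci on $E_\tau$) and in Corollary~\ref{qfa} (producing the arc $\gamma$ with the required order of vanishing while keeping $\gamma(\Delta^\circ)$ in $C_F$, which is possible because the chosen coordinate chart avoids boundary components not passing through $x$).
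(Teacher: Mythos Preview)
Your proposal is correct and follows the same approach as the paper: translate the zero of $\dlog f^\alpha|_{E_\tau}$ into the statement $(x,\alpha)\in C_{F,X^\Sigma}(\log E)$ via Lemma~\ref{id}, then invoke Corollary~\ref{qfa}. Your added care in isolating the unique irreducible boundary component through $x$ is a harmless refinement of what the paper leaves implicit.
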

\begin{proof}
If $\dlog f^\alpha|_{E_\tau^\circ}$ has a zero $x\in E_\tau^\circ$, then $(x,\alpha)\in C_{F,X^\Sigma,E_\tau}$, and thus by Lemma~\ref{id}, $(x,\alpha)\in C_{F,X^\Sigma}(\log E)$. The claim then follows from Corollary~\ref{qfa}.
\end{proof}
We will use the following theorem of Huh in order to apply Corollary~\ref{maincor}.
 \begin{theorem}[\cite{FK,huh_2013}]\label{thm huh} 
	Let $Z$ be a smooth very affine variety contained in a torus with coordinates $t_1,\dots,t_p$. Then there exists a Zariski dense open subset of $\CC^p$, s.t. for all $\alpha$ in this subset, the form $\sum_{i=1}^p \alpha_i\frac{dt_i}{t_i}$ has exactly $(-1)^{\dim(Z)}\chi(Z)$ many zeroes on $Z.$
\end{theorem}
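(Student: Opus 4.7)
The plan is to follow the Franecki--Kapranov / Huh strategy: realize the zeros of $\dlog t^\alpha$ as the zero scheme of a general section of a log cotangent sheaf on a smooth SNC compactification of $Z$, then use a log Gauss--Bonnet formula to identify the count with $(-1)^{\dim Z}\chi(Z)$.

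First I would choose a smooth projective compactification $Z\hookrightarrow Y$ with $E\coloneqq Y\setminus Z$ a simple normal crossings divisor (Hironaka). Since each coordinate $t_i$ is nowhere vanishing on $Z$, its zero and pole divisors on $Y$ are supported in $E$, so each $dt_i/t_i$ extends to a global section of $\Omega^1_Y(\log E)$. This gives a linear map
\[
\phi\colon \CC^p \;\longrightarrow\; H^0\!\left(Y,\Omega^1_Y(\log E)\right),\qquad \alpha\longmapsto \omega_\alpha=\sum_{i=1}^p\alpha_i\frac{dt_i}{t_i},
\]
and pulls back to the sujection $\mathcal O_Z^p\twoheadrightarrow \Omega^1_Z$ coming from the conormal sequence for $Z\hookrightarrow(\CC^*)^p$, since the $dt_i/t_i$ frame the cotangent bundle of the ambient torus.

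Next I would verify two transversality properties for $\alpha$ in a Zariski open subset $U\subseteq\CC^p$:
\begin{itemize}
\item[(a)] The zero locus $\{\omega_\alpha=0\}\subseteq Y$ avoids every stratum of $E$.
\item[(b)] On $Z$ the section $\omega_\alpha$ vanishes transversally, so $\{\omega_\alpha=0\}\cap Z$ is a finite reduced set of the expected dimension $0$.
\end{itemize}
For (b), surjectivity of $\mathcal O_Z^p\twoheadrightarrow\Omega^1_Z$ shows the linear system is base-point free on $Z$ and globally generates $\Omega^1_Z$, so standard Bertini gives transversality for general $\alpha$. For (a), I would stratify $E$ by its SNC strata $E_\tau^\circ$ and use the local formula \eqref{dloglocal}: on a codimension-$r$ stratum corresponding to components $E_{j_1},\dots,E_{j_r}$, the restriction of $\omega_\alpha$ has residue terms $q_{j_k}(\alpha)=\sum_i\ord_{E_{j_k}}(f_i)\alpha_i$; requiring the section to vanish at some point of the stratum forces $\alpha$ into the proper linear subspace $\bigcap_k H_{E_{j_k}}$. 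Varying over the finitely many strata produces the proper closed condition to exclude.

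Once (a) and (b) are established, the cardinality of $\{\omega_\alpha=0\}$ equals the degree on $Y$ of the top Chern class $c_{\dim Y}\!\left(\Omega^1_Y(\log E)\right)$. The log Gauss--Bonnet theorem of Deligne--Norimatsu identifies this degree with $(-1)^{\dim Z}\chi(Z)$, giving the claim.

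The main obstacle is (a): the map $\phi$ is not in general surjective at boundary points, so one cannot invoke global Bertini directly to push the base locus away from $E$. The delicate point is to show that the residues $q_{j_k}(\alpha)$ along the SNC components are genuinely independent linear forms in $\alpha$ (equivalently, that the $\ord_{E_{j_k}}(f_i)$ vectors span the relevant hyperplane conditions), so that the union of the bad loci in $\alpha$-space is a finite union of proper linear subvarieties and hence avoided by a dense open set.
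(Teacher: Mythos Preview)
The paper does not prove this theorem; it is quoted as an external result of Franecki--Kapranov and Huh and used as a black box. So there is no ``paper's own proof'' to compare your sketch against.

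That said, your sketch is essentially the Huh argument and is correct in outline. One comment on the obstacle you flag at the end: you do not need the linear forms $q_{j_k}(\alpha)=\sum_i\ord_{E_{j_k}}(t_i)\alpha_i$ to be \emph{independent}; you only need each $q_j$ to be \emph{nonzero}, so that the bad locus $\bigcup_j\{q_j=0\}\subseteq\CC^p$ is a finite union of genuine hyperplanes. This is automatic once $Z$ is closed in the torus: if all $\ord_{E_j}(t_i)=0$, then every $t_i$ is regular and invertible at the generic point of $E_j$, so on an open $U\ni\eta_{E_j}$ in $Y$ the tuple $(t_1,\dots,t_p)$ defines a morphism $U\to(\CC^*)^p$ extending the closed embedding $Z\hookrightarrow(\CC^*)^p$; its image lies in $\overline{Z}=Z$, and by separatedness of $Y$ the composite $U\to Z\hookrightarrow Y$ must agree with the inclusion $U\hookrightarrow Y$, forcing $U\subseteq Z$ and contradicting $\eta_{E_j}\in E$. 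Hence your step~(a) goes through without the stronger independence hypothesis, and together with (b) and the log Gauss--Bonnet identity $\int_Y c_{\dim Y}\bigl(\Omega^1_Y(\log E)\bigr)=(-1)^{\dim Z}\chi(Z)$ the count follows.
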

In this theorem, smoothness is a necessary assumption---the failure of the statement for singular $X$ is explained in~\cite{BudurWang}. Notice that the boundary components $E_\tau^\circ$ are disjoint unions of smooth very affine varieties, since they are contained in the  torus orbit $\mathcal{O}_\tau\cong (\CC^*)^{p-1}$. Hence, Huh's theorem also applies to~$E_{\tau}^\circ$. 

Since the torus orbit $\mathcal{O}_\tau$ is naturally a quotient of $(\CC^*)^p$, its coordinate ring naturally is a subring of the coordinate ring of $(\CC^*)^p$. Under this identification, when applying Theorem~\ref{thm huh} to $E_{\tau}^\circ\subseteq \mathcal{O}_\tau$, the vector space $\CC^{p-1}$ in which the $\alpha$ live is naturally identified with $\tau^\perp$. When applying this theorem to $X$ (and $E_\tau^\circ$, resp.), we denote the Zariski dense subset that appears in the theorem by $V\subseteq \CC^p$ (and by $V_\tau\subseteq \tau^\perp$, resp.).

\begin{theorem}\label{mainthm}
Assume that $\chi(E_\tau^\circ)\not=0$. Then for all $\alpha\in V_\tau$, the primitive ray generator $v_\tau$ is contained in~$ Q_{F,\alpha}$. In particular, $H_{E_\tau}\subseteq S_F$. Moreover, these are the only codimension-one components of $S_F$.
\end{theorem}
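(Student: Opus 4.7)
The plan is to separate the two assertions and attack each with one application of Huh's Theorem~\ref{thm huh}: the inclusion $H_{E_\tau}\subseteq S_F$ follows directly from Theorem~\ref{thm huh} and Corollary~\ref{maincor}, while the claim that no other hyperplanes appear as codimension-one components of $S_F$ is established by reading the same chain of implications backwards, using Lemmas~\ref{huhLemma} and~\ref{id}.

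First I would apply Theorem~\ref{thm huh} to the smooth very affine variety $E_\tau^\circ\subseteq\mathcal{O}_\tau\cong(\CC^*)^{p-1}$ to obtain a Zariski dense open $V_\tau\subseteq\tau^\perp$ on which the restricted form $\dlog f^\alpha|_{E_\tau^\circ}$ has exactly $(-1)^{\dim E_\tau^\circ}\chi(E_\tau^\circ)$ zeros. Since $\chi(E_\tau^\circ)\neq 0$ by hypothesis, the form has at least one zero for every $\alpha\in V_\tau$, and Corollary~\ref{maincor} then forces $v_\tau\in Q_{F,\alpha}$. In view of Lemma~\ref{SFasQF} this yields $V_\tau\subseteq S_F\cap H_{E_\tau}$, and density of $V_\tau$ in $H_{E_\tau}=\PP(\tau^\perp)$ together with closedness of $S_F$ in $\PP^{p-1}$ upgrades this to $H_{E_\tau}\subseteq S_F$.

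For the second assertion, Proposition~\ref{SFBound} already shows that every codimension-one component of $S_F$ is contained in $\bigcup_\tau H_{E_\tau}$ and, being an irreducible hyperplane, coincides with some $H_{E_\tau}$. Suppose then that $H_{E_\tau}$ is such a component; I would argue that $\chi(E_\tau^\circ)\neq 0$. Choose $\alpha\in H_{E_\tau}$ generic, meaning $\alpha$ lies outside every other hyperplane $H_{E_{\tau'}}$ (distinct from $H_{E_\tau}$) and outside every lower-dimensional irreducible component of $S_F$. Since $\alpha\in S_F$, Corollary~\ref{jets} produces a morphism $\gamma\colon\Delta\to X^\Sigma\times\PP^{p-1}$ with $\gamma(\Delta^\circ)\in C_F$ and $\gamma(0)\in E\times\{\alpha\}$. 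Proposition~\ref{SFBound} forces every boundary component through $\pi_1(\gamma(0))$ to have hyperplane equal to $H_{E_\tau}$, so up to relabeling $\pi_1(\gamma(0))\in E_\tau^\circ$. Now Lemma~\ref{huhLemma} identifies $C_{F,X^\Sigma}$ with $C_{F,X^\Sigma}(\log E)$, and Lemma~\ref{id} identifies $C_{F,X^\Sigma}(\log E)\cap\pi_1^{-1}(E_\tau)$ with $C_{F,X^\Sigma,E_\tau}$; consequently $\dlog f^\alpha|_{E_\tau^\circ}$ vanishes at $\pi_1(\gamma(0))$. A second application of Theorem~\ref{thm huh} to $E_\tau^\circ$ then shows that for $\alpha$ in a dense open of $\tau^\perp$ the number of such zeros equals $(-1)^{\dim E_\tau^\circ}\chi(E_\tau^\circ)$; since this number is positive on a dense open set, it cannot be zero, and so $\chi(E_\tau^\circ)\neq 0$.

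The hard part is the second half, specifically isolating the correct notion of ``general'': one must extract a single Zariski dense open subset of $H_{E_\tau}$ on which $\alpha$ simultaneously avoids every other hyperplane $H_{E_{\tau'}}$, avoids the lower-dimensional irreducible components of $S_F$, and lies in the locus where Huh's theorem applies to $E_\tau^\circ$. With this genericity secured, the successive identifications furnished by Proposition~\ref{SFBound}, Lemma~\ref{huhLemma}, and Lemma~\ref{id} deliver the conclusion essentially automatically.
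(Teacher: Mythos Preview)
Your proposal is correct and follows essentially the same approach as the paper's proof: both directions hinge on Theorem~\ref{thm huh} applied to $E_\tau^\circ$, with Corollary~\ref{maincor} for the forward inclusion and the combination of Proposition~\ref{SFBound} with Lemma~\ref{id} (and implicitly Lemma~\ref{huhLemma}) for the converse. The only cosmetic differences are that you invoke Lemma~\ref{SFasQF} explicitly and spell out the genericity conditions on~$\alpha$ more carefully than the paper does.
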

\begin{proof}
Since $E_\tau^\circ$ is smooth and very affine, for every $\alpha\in V_\tau$ the form $\dlog f^\alpha|_{E_\tau^\circ}$ has exactly $(-1)^{\dim E_\tau}\chi(E_\tau^\circ)$ zeroes, all contained in $E_\tau^\circ$ by Theorem~\ref{thm huh}. The first claim follows from Corollary~\ref{maincor}. Since $V_\tau$ is dense inside $\tau^\perp$ and $S_F$ is closed, we conclude that $H_{E_\tau}=\PP(\tau^\perp)\subseteq S_F$. 

We prove the last claim by reasoning the other way round. 
We recall that by Proposition \ref{SFBound}, every codimension-one component of $S_F$ is of the form $\PP(\tau'^\perp)$ for some ray $\tau'\in \Sigma$.
Let $\tau'$ be some ray in $\Sigma$ and suppose that $H_{E_{\tau'}}\subseteq S_F$. Then for all $\alpha\in H_{E_{\tau'}}$, there exists $\gamma:\Delta\to C_{F,X^\Sigma}$ with $\gamma(\Delta^\circ)\in  C_F$ and $\gamma(0)\in E\times\{\alpha\}$. Choosing $\alpha$ away from $\bigcup_{\tau''\not=\tau'}H_{E_{\tau''}}$, it follows from Proposition~\ref{SFBound} that $\gamma(0)\in E_{\tau'}^\circ\times\{\alpha\}$. 

Since $\gamma(0)\in C_{F,X^\Sigma}\cap \pi_1^{-1}(E_{\tau'}^\circ)$ it follows from Lemma \ref{id} that $\gamma(0)\in C_{F,X^\Sigma,E_{\tau'}}$, and thus $\dlog f^\alpha|_{E_{\tau'}}(\pi_1(\gamma(0)))=0$. We conclude that for all $\alpha\in H_{E_{\tau'}}$ the form $\dlog f^\alpha|_{E_{\tau'}}$ has a zero on $E_{\tau'}^\circ$. Then by Theorem~\ref{thm huh}, $E_{\tau'}^\circ$ has non-zero Euler characteristic.
\end{proof}	
\begin{remark}
	Smooth very affine varieties with zero Euler characteristic can still have ``resonant" $\alpha$ for which $\dlog f^\alpha$ has a zero. In this case, strict subvarieties of some $H_{E_\tau}$ might show up as an irreducible component of~$S_F$. For relations to resonance varieties of hyperplane arrangements, we refer to \cite{cohen2012} and the references therein.
\end{remark}
 We denote by $I \triangleleft \CC[t_1^{\pm 1},\dots, t_p^{\pm 1}]$ the defining ideal of~$X$. For $\mathbf{w}\in \mathbb{R}^n$, $\init_{\mathbf{w}}(I)$ denotes the {\em initial ideal} of $I$ as defined in \cite[Section 1.6]{MS15}.
The following lemma gives a description of the components $E_\tau^\circ$ in terms of initial ideals. 
\begin{lemma}\label{lemmatorus}
	Let $\tau$ be a ray in $\Trop(X)$ with primitive ray generator $\mathbf{w}.$ Under the natural isomorphism  $\mathcal{O}_\tau \cong  (\CC^*)^p/\TT_{\mathbf{w}},$ the intersection
	$X^\Sigma \cap \mathcal{O}_\tau$ corresponds to $V(\init_{\mathbf{w}}(I))/\TT_{\mathbf{w}}.$
\end{lemma}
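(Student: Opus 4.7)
The plan is to work in the affine chart $U_\tau := \Spec \CC[\tau^\vee \cap \ZZ^p]$ of $\TT^\Sigma$ containing $\mathcal{O}_\tau$, and to reduce the statement to an identity of ideals in the coordinate ring of $\mathcal{O}_\tau$. In $U_\tau$ the orbit $\mathcal{O}_\tau$ is cut out by the monomial ideal $J := (t^u : u \in \tau^\vee \cap \ZZ^p,\ \mathbf{w}\cdot u > 0)$, and $\CC[\tau^\vee \cap \ZZ^p]/J$ is naturally isomorphic to $\CC[\tau^\perp \cap \ZZ^p]$. Since the latter is exactly the ring of $\TT_\mathbf{w}$-invariants in $\CC[t_1^{\pm 1},\dots,t_p^{\pm 1}]$, this recovers the natural isomorphism $\mathcal{O}_\tau \cong (\CC^*)^p/\TT_\mathbf{w}$ appearing in the statement.

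Because $X^\Sigma \cap U_\tau$ is the scheme-theoretic closure of $X$ in $U_\tau$, its defining ideal in $\CC[\tau^\vee \cap \ZZ^p]$ is the contraction $I' := I \cap \CC[\tau^\vee \cap \ZZ^p]$, and therefore the ideal of $X^\Sigma \cap \mathcal{O}_\tau$ inside $\CC[\tau^\perp \cap \ZZ^p]$ is the image $\pi(I')$ under the quotient map $\pi \colon \CC[\tau^\vee \cap \ZZ^p] \twoheadrightarrow \CC[\tau^\perp \cap \ZZ^p]$. On the other side, $\init_\mathbf{w}(I)$ is $\mathbf{w}$-homogeneous and hence $\TT_\mathbf{w}$-invariant, so $V(\init_\mathbf{w}(I))/\TT_\mathbf{w}$ has defining ideal $\init_\mathbf{w}(I) \cap \CC[\tau^\perp \cap \ZZ^p]$. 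The lemma therefore reduces to the identity $\pi(I') = \init_\mathbf{w}(I) \cap \CC[\tau^\perp \cap \ZZ^p]$ in $\CC[\tau^\perp \cap \ZZ^p]$.

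The forward inclusion is direct bookkeeping with $\mathbf{w}$-degrees: every $f \in I'$ has all monomials of $\mathbf{w}$-degree $\geq 0$, and $\pi$ retains exactly the $\mathbf{w}$-degree-zero part, which either vanishes (if $f \in J$) or coincides with $\init_\mathbf{w}(f)$. For the reverse inclusion, given $g \in \init_\mathbf{w}(I)\cap \CC[\tau^\perp\cap\ZZ^p]$, write $g = \sum_i h_i \init_\mathbf{w}(f_i)$ with $f_i \in I$ and $h_i$ Laurent monomials chosen $\mathbf{w}$-homogeneous of degree $-\deg_\mathbf{w}(\init_\mathbf{w}(f_i))$, so that each summand has $\mathbf{w}$-degree zero. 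Setting $f := \sum_i h_i f_i \in I$, a short computation from the very definition of $\init_\mathbf{w}$ shows that every monomial of each $h_i f_i$ has $\mathbf{w}$-degree $\geq 0$ (hence $f \in I'$) and that the $\mathbf{w}$-degree-zero part of $f$ is precisely $g$, yielding $\pi(f) = g$. The main obstacle is not any single computation but the careful alignment of the several identifications---the affine chart of $\TT^\Sigma$, the quotient by $\TT_\mathbf{w}$, and the Gr\"obner-theoretic interpretation of $\init_\mathbf{w}(I)$---so that the scheme-theoretic content of the lemma translates faithfully into the ideal identity above.
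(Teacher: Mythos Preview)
The paper does not actually give its own proof of this lemma; it simply refers the reader to \cite[page~308]{MS15}. Your argument is correct and is precisely the standard toric--Gr\"obner computation one finds there: pass to the affine chart $U_\tau=\Spec\CC[\tau^\vee\cap\ZZ^p]$, note that restriction to $\mathcal{O}_\tau$ is the projection onto the $\mathbf{w}$-degree-zero part, and verify directly that this projection carries $I\cap\CC[\tau^\vee\cap\ZZ^p]$ onto $\init_{\mathbf{w}}(I)\cap\CC[\tau^\perp\cap\ZZ^p]$. So your proof and the paper's (cited) proof are the same in substance; you have simply written out what the reference contains.
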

\begin{proof}
	For a proof, see \cite[page 308]{MS15}.
\end{proof}
Recall that we started by taking a fan $\Sigma$ supported on the tropical variety of $X$. In general, there are many such $\Sigma$ and there is no coarsest fan structure on $\Trop(X)$. Despite the lack of a coarsest fan structure, Theorem~\ref{mainthm} guarantees that some rays are present in every fan structure, namely those for which $E_\tau^\circ =X^\Sigma\cap \mathcal{O}_\tau$ has non-zero Euler characteristic. We now characterize these rays in terms of initial ideals, under a connectedness assumption.
\begin{definition}\label{def rigid}
A ray $\RR_{\geq 0}\cdot\mathbf{w}\subseteq \Trop(X)$ is called {\em rigid} if for all $\mathbf{v}\in\RR^{p}\setminus \RR\mathbf{w}$,  $\init_{\mathbf{w}}(I)\not=\init_{\mathbf{w}+\epsilon\mathbf{v}}(I)$  for all $1\gg\epsilon>0$.
\end{definition}

In order to decide {\em in practice} if a ray is rigid, one has has to compare the initial ideal of the defining ideal of $X$ w.r.t. this ray with the initial ideal w.r.t. a relative interior point of all neighboring cones.
Rigid rays can also be characterized as follows.
\begin{lemma}\label{rigidtorus}
The ray $\tau=\RR_{\geq 0}\cdot\mathbf{w}\subseteq \Trop(X)$ is rigid if and only if $E_\tau^\circ=X^\Sigma\cap \mathcal{O}_\tau $ is not invariant under any non-trivial subtorus of $\mathcal{O}_\tau$.
\end{lemma}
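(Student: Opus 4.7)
The plan is to encode both conditions of the lemma as properties of the initial ideal $J\coloneqq \init_{\mathbf{w}}(I)$ and verify that they coincide. First I would use Lemma~\ref{lemmatorus} to set up the translation: $E_\tau^\circ = V(J)/\TT_{\mathbf{w}}$ sits inside $\mathcal{O}_\tau\cong (\CC^*)^p/\TT_{\mathbf{w}}$, and subtori of $\mathcal{O}_\tau$ are in bijection with subtori $T'\subseteq (\CC^*)^p$ containing $\TT_{\mathbf{w}}$. Since $J$ is by construction $\mathbf{w}$-homogeneous, the full stabilizer subtorus of $V(J)$ in $(\CC^*)^p$ automatically contains $\TT_{\mathbf{w}}$. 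Hence ``$E_\tau^\circ$ is invariant under no non-trivial subtorus of $\mathcal{O}_\tau$'' translates into ``the stabilizer subtorus of $V(J)$ in $(\CC^*)^p$ is equal to $\TT_{\mathbf{w}}$''.

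Next I would unpack rigidity using the standard Gr\"obner identity $\init_{\mathbf{w}+\epsilon\mathbf{v}}(I) = \init_{\mathbf{v}}(J)$ for all sufficiently small $\epsilon>0$ (see, e.g., \cite[Prop.~2.6.1]{MS15}). Combined with the elementary fact that $\init_{\mathbf{v}}(J)=J$ if and only if $J$ is $\mathbf{v}$-homogeneous, this rephrases rigidity of $\tau$ as: $J$ fails to be $\mathbf{v}$-homogeneous for every $\mathbf{v}\in\RR^p\setminus \RR\mathbf{w}$. Equivalently, the set $L\coloneqq \{\mathbf{v}\in\RR^p : J \text{ is } \mathbf{v}\text{-homogeneous}\}$ equals $\RR\mathbf{w}$.

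Finally, the two reformulations are matched via the observation that $L$ is a \emph{rational} linear subspace of $\RR^p$, since it is cut out by finitely many rational linear equations (the weight-gap equations on any finite set of generators of $J$). Consequently $L=\RR\mathbf{w}$ if and only if $L\cap\ZZ^p = \ZZ\mathbf{w}$. For $\mathbf{v}\in\ZZ^p$, $\mathbf{v}$-homogeneity of $J$ is precisely invariance of $V(J)$ under the one-parameter subgroup generated by $\mathbf{v}$, so $L\cap\ZZ^p$ is exactly the cocharacter lattice of the stabilizer subtorus of $V(J)$ in $(\CC^*)^p$. Together with the first paragraph, $L=\RR\mathbf{w}$ is equivalent to this stabilizer being $\TT_{\mathbf{w}}$, which is the desired equivalence. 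The main subtle point I expect is this last transition between real weight vectors (appearing in the definition of rigidity) and integer weight vectors (which classify algebraic subtori); it is handled cleanly by the rationality of~$L$.
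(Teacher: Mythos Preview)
Your proposal is correct and follows essentially the same approach as the paper's proof, which also translates rigidity into a homogeneity condition on $\init_{\mathbf{w}}(I)$ and then into torus-invariance via Lemma~\ref{lemmatorus}. Your version is simply more explicit about the Gr\"obner identity $\init_{\mathbf{w}+\epsilon\mathbf{v}}(I)=\init_{\mathbf{v}}(\init_{\mathbf{w}}(I))$ and about the passage from real to integral weight vectors via the rationality of~$L$, both of which the paper leaves implicit.
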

\begin{proof}
The ray $\tau$ is rigid if and only if $\init_{\mathbf{w}}(I)$ is not homogeneous with respect to any weight vector other than scalar multiples of $\mathbf{w}$. In other words, $\tau$ is rigid if and only if $\init_{\mathbf{w}}(I)$ is not preserved under any non-trivial subtorus of $(\CC^*)^p$ other than $\TT_\mathbf{w}:=\{(t^{w_1},\dots,t^{w_p})\mid t\in \CC^*\}$. The claim then follows from Lemma~\ref{lemmatorus}.
\end{proof}
\begin{lemma}
In the situation as above, the following two statements hold true.
\begin{enumerate}[(i)] 
\item If $\chi(E_\tau^\circ)\not=0$, then $E_\tau^\circ$ is not preserved by any non-trivial subtorus of~$\mathcal{O}_\tau$.
\item Assume that $E_\tau^\circ$ is connected. If $E_\tau^\circ$ is not preserved by any non-trivial subtorus, then $\chi(E_\tau^\circ)\not=0$.
\end{enumerate}\label{toruschi}
\end{lemma}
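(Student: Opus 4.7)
For part (i), the plan is a direct contrapositive argument via a principal bundle. Suppose $E_\tau^\circ$ is preserved by a non-trivial subtorus $T' \subseteq \mathcal{O}_\tau$. The translation action of $\mathcal{O}_\tau$ on itself has trivial stabilizer at every point, so the restricted action of $T'$ on $E_\tau^\circ$ will be free, making $E_\tau^\circ \to E_\tau^\circ / T'$ a principal $T'$-bundle (étale locally). Multiplicativity of the topological Euler characteristic in fiber bundles combined with $\chi(T') = 0$ (any positive-dimensional torus contains a $\CC^*$-factor) then forces $\chi(E_\tau^\circ) = 0$, contradicting the hypothesis of (i).

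For part (ii), I would also argue contrapositively: assume $\chi(E_\tau^\circ) = 0$ and $E_\tau^\circ$ connected, and produce a non-trivial subtorus of $\mathcal{O}_\tau$ preserving $E_\tau^\circ$. Since $E_\tau^\circ \subseteq \mathcal{O}_\tau \cong (\CC^*)^{p-1}$ is smooth and very affine, Huh's Theorem~\ref{thm huh} translates $\chi(E_\tau^\circ) = 0$ into the statement that for generic $\alpha \in \tau^\perp$ the form $\dlog(t|_{E_\tau^\circ})^\alpha$ has no zero on $E_\tau^\circ$. Phrased in terms of the incidence variety $I = \{(z,\alpha) \in E_\tau^\circ \times \tau^\perp \mid \dlog(t|_{E_\tau^\circ})^\alpha(z) = 0\}$, this amounts to the projection $I \to \tau^\perp$ failing to be dominant. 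The next step is a dimension count via the other projection $I \to E_\tau^\circ$, whose fiber over $z$ is $\ker(\mathrm{ev}_z)$ for the evaluation map $\mathrm{ev}_z\colon \tau^\perp \to T_z^* E_\tau^\circ$. This forces the associated logarithmic Gauss map $z \mapsto \ker(\mathrm{ev}_z)$ to have positive-dimensional generic fiber, producing a positive-dimensional subvariety $F \subseteq E_\tau^\circ$ whose tangent spaces all coincide as subspaces of $\mathrm{Lie}(\mathcal{O}_\tau)$.

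Integrating this common tangent direction shows $F$ lies in a single coset of a Lie subgroup $H \subseteq \mathcal{O}_\tau$; replacing $H$ by the smallest closed algebraic subgroup of $\mathcal{O}_\tau$ containing it and taking the identity component should yield an algebraic subtorus $T' \subseteq \mathcal{O}_\tau$. The connectedness hypothesis on $E_\tau^\circ$ is then used to propagate the infinitesimal tangency of $\mathrm{Lie}(T')$ globally across $E_\tau^\circ$, at which point the infinitesimal invariance integrates to an actual $T'$-action preserving $E_\tau^\circ$, which is the desired non-trivial subtorus. The hardest step will be this last one---converting the non-dominance of $I \to \tau^\perp$ into a globally defined and rationally integrable tangent direction. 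Two ingredients are essential here: the connectedness assumption, without which the tangent direction could jump between components and fail to globalize; and the rational structure on $\tau^\perp$ coming from the character lattice of $\mathcal{O}_\tau$, which ultimately ensures that the resulting subgroup $T'$ is algebraic rather than merely analytic.
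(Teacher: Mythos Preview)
Your argument for part~(i) is essentially the paper's: both argue the contrapositive via freeness of the induced action. The paper passes to the finite subgroups $\ZZ/m\ZZ\subseteq T'$ and uses that $E_\tau^\circ\to E_\tau^\circ/(\ZZ/m\ZZ)$ is an $m$-fold cover for every $m$, forcing $m\mid\chi(E_\tau^\circ)$; you instead quotient by the whole torus $T'$ and use $\chi(T')=0$. These are interchangeable.

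For part~(ii) the paper does something entirely different and much shorter: it cites a theorem of Gabber--Loeser on perverse sheaves on tori. Smoothness plus connectedness make $E_\tau^\circ$ irreducible, so $\iota_*\bigl(\CC_{E_\tau^\circ}[\dim E_\tau^\circ]\bigr)$ is an irreducible perverse sheaf on $\mathcal{O}_\tau$, and \cite[Theorem~0.3]{GL96} says that any such sheaf with vanishing Euler characteristic is invariant under a non-trivial subtorus. That is the entire proof; the connectedness hypothesis is used only to guarantee irreducibility of the perverse sheaf.

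Your direct Gauss-map approach for~(ii) has a real gap. Set $m=p-1$ and $n=\dim E_\tau^\circ$. From non-dominance of $I\to\PP(\tau^\perp)$ together with $\dim I=m-1$ you may conclude that a generic point $[\alpha_0]$ in the image has positive-dimensional fiber $F_{\alpha_0}=\{z:\alpha_0\in\ker(\mathrm{ev}_z)\}$. But this says only that the log-tangent spaces $T_zE_\tau^\circ\subseteq\mathrm{Lie}(\mathcal{O}_\tau)$ for $z\in F_{\alpha_0}$ all lie in the single hyperplane $\alpha_0^\perp$; it does \emph{not} force them to coincide, nor does it force the Gauss map $z\mapsto\ker(\mathrm{ev}_z)$ to have positive-dimensional generic fiber. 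Those stronger conclusions hold only in the hypersurface case $n=m-1$, where $\ker(\mathrm{ev}_z)$ is a line. In higher codimension, non-dominance is equivalent to the union $\bigcup_z\PP(\ker(\mathrm{ev}_z))$ missing a point of $\PP^{m-1}$, whereas a common tangent direction requires this union to lie in a hyperplane; a cone over a smooth quadric already separates these two conditions. So there is no common tangent direction to integrate along $F_{\alpha_0}$, and connectedness alone cannot propagate a condition that holds only on a proper subvariety to all of $E_\tau^\circ$. Upgrading ``$\chi=0$'' to ``a fixed one-parameter subgroup is everywhere tangent to $E_\tau^\circ$'' is precisely the non-trivial content of the Gabber--Loeser (equivalently Franecki--Kapranov) theorem the paper invokes; your dimension count does not reprove it.
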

\begin{proof}
%If $E_\tau^\circ$ is preserved by a non-trivial subtorus $\TT$ of $\mathcal{O}_\tau$, then $E_\tau^\circ$ is a principal $\TT$-bundle. Since $\TT$ is a special reductive group, this means that $E_\tau^\circ$ is a Zariski locally trivial $\TT$-bundle. 
% Since $\TT$ is a special reductive group, this means that $E_\tau^\circ$ is a Zariski locally trivial $\TT$-bundle. 
%Since Euler characteristics are multiplicative over such bundles, we conclude that $\chi(E_\tau^\circ)=0$, since $\chi(\TT)=0$. This proves part $(i)$. Now assume that $E_\tau^\circ$ is connected and $\chi(E_\tau^\circ)=0$. 
Denote by $\TT$ the non-trivial subtorus of $\mathcal{O}_\tau$ that preserves $E_\tau^\circ.$ The action of $\TT$ on $E_\tau^\circ$ is free and hence so is the action of the group $\mathbb{Z}/m\mathbb{Z}$ of $m$-th roots of unity for all $m\in \mathbb{Z}_{>0}.$ Hence $E_\tau^\circ \to  E_\tau^\circ / (\mathbb{Z}/m\mathbb{Z}) $ is an $m$-fold cover. Hence $\chi(E_\tau^\circ)=m\cdot \chi(E_\tau^\circ / (\mathbb{Z}/m\mathbb{Z}) ).$ In particular, $\chi(E_\tau^\circ)$ is divisible by every non-negative number $m$ and hence is $0,$ concluding the proof of the first statement.
Since $X$ is sch\"on, $E_\tau^\circ$ is smooth, and by connectedness it is thus irreducible. Let~$\mathcal{F}$ be the irreducible perverse sheaf $\iota_* (\CC_{E_\tau^\circ}[\dim E_\tau^\circ])$, where $\iota$ denotes  the inclusion $E_\tau^\circ \hookrightarrow \mathcal{O}_\tau$. By \cite[Theorem 0.3]{GL96}, $\mathcal{F}$~and hence also $E_\tau^\circ$ is preserved by a non-trivial subtorus of $\mathcal{O}_\tau$.
\end{proof}
\noindent In summary, we obtain the following implications:
\begin{align*}
\tau \text{ is rigid}  \Leftrightarrow  E_\tau^\circ \text{ is not invariant under any non-trivial subtorus of } \mathcal{O}_\tau \Leftarrow  \chi\left(E_\tau^\circ\right)\not=0,
\end{align*}
where the last implication is a bi-implication if we assume in addition that $E_\tau^\circ$ is connected. Combining this with Theorem~\ref{mainthm}, we obtain the following proposition.
\begin{proposition}\label{main}
Assume that for all $\tau\in \Sigma$ the intersection $E_\tau^\circ=X^{\Sigma}\cap \mathcal{O}_\tau$ is connected. If $\tau$ is a rigid ray, then for  all $\alpha\in V_\tau$, $v_\tau\in Q_{F,\alpha}$. In particular, \mbox{$\PP(\tau^\perp)\subseteq S_F$.} This gives a bijection of the rigid rays in $\Trop(X)$ and the codimension-one components of~$S_F$.
\end{proposition}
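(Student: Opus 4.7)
The plan is to synthesize Theorem~\ref{mainthm} with the characterizations of rigidity developed in the two preceding lemmas; essentially all the substantive work is already done. Theorem~\ref{mainthm} asserts $H_{E_\tau}\subseteq S_F$ whenever $\chi(E_\tau^\circ)\neq 0$, and that the hyperplanes of this form exhaust the codimension-one components of $S_F$. What remains is to re-express the Euler characteristic hypothesis as the tropical condition of rigidity.

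To do this, I would chain Lemma~\ref{rigidtorus} with both parts of Lemma~\ref{toruschi}. Lemma~\ref{rigidtorus} equates rigidity of $\tau$ with the absence of any nontrivial subtorus of $\mathcal{O}_\tau$ preserving $E_\tau^\circ$. Lemma~\ref{toruschi}(i) gives one implication of the equivalence with the nonvanishing of $\chi(E_\tau^\circ)$, while the standing connectedness hypothesis in Proposition~\ref{main} is precisely what is needed to invoke Lemma~\ref{toruschi}(ii) for the converse. Assembling these produces the biconditional
$$\tau \text{ is a rigid ray of } \Sigma \iff \chi(E_\tau^\circ)\neq 0.$$

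With this biconditional in hand, the first assertion of the proposition follows at once: for a rigid ray $\tau$ and any $\alpha\in V_\tau$, Theorem~\ref{mainthm} provides $v_\tau\in Q_{F,\alpha}$ and hence $\PP(\tau^\perp) = H_{E_\tau}\subseteq S_F$. For the bijection, the same theorem identifies the codimension-one components of $S_F$ with precisely the $H_{E_\tau}$ coming from rays with $\chi(E_\tau^\circ)\neq 0$; by the biconditional these are exactly the hyperplanes $\PP(\tau^\perp)$ with $\tau$ rigid. The only remaining point, and the one I expect to be the mildest technical step, is injectivity of the assignment $\tau\mapsto\PP(\tau^\perp)$ on rigid rays: a coincidence $\PP(\tau^\perp)=\PP(\tau'^\perp)$ forces $v_\tau$ and $v_{\tau'}$ to be proportional primitive integer vectors and hence equal up to sign, a configuration that can be excluded from the support structure of $\Sigma$ on $\Trop(X)$ together with the fact that both rays should satisfy the rigidity condition.
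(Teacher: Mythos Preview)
Your proposal is correct and follows exactly the paper's argument: the paper's proof is precisely the chain of implications displayed just before the proposition (Lemma~\ref{rigidtorus} together with both parts of Lemma~\ref{toruschi}, the connectedness hypothesis upgrading the last implication to a biconditional), followed by an appeal to Theorem~\ref{mainthm}.

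One comment on your final paragraph: the injectivity of $\tau\mapsto\PP(\tau^\perp)$ on rigid rays, which you flag, is not addressed at all in the paper. Your proposed resolution is not quite airtight, however: opposite rigid rays $\tau$ and $-\tau$ can genuinely coexist in~$\Sigma$ whenever $\Trop(X)$ contains the full line $\RR v_\tau$ (for instance when the defining ideal of $X$ is homogeneous, as in the central hyperplane arrangement discussed in Section~\ref{subsection LCT}, or already for $X=V(t_1-t_2)\subseteq(\CC^*)^2$). In such cases the assignment is two-to-one onto the corresponding hyperplane, so the word \emph{bijection} in the statement should be read modulo this degeneracy rather than as a claim you can fully establish.
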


\begin{example}\label{ex:hpa1}
	We pick up the arrangement from Example~\ref{ex:hpa}, i.e., $X'\subseteq \CC^2$ is the complement of the arrangement defined by $f=xy(x-y)(x-1)$, the four factors of which form the tuple $F=(f_1,f_2,f_3,f_4)$. 
 Mapping $X'$ into $\CC^4$ via the tuple $F$, its image is 
$$X =V\left(t_1-t_4-1,  t_1-t_2-t_3\right).$$
By Lemma~\ref{lemmatorus}, all rays of the Gr\"obner fan on $\Trop(X)$ are rigid. They are generated by the vectors $e_2,$ $e_3,$ $e_4,$ $-e_2-e_3,$ $e_1+e_2+e_3$, and $-e_1-e_2-e_3-e_4.$  It follows from Proposition~\ref{main} that the codimension-one part of $S_F$ equals
$$\left\{s_1+s_2+s_3+s_4=0\right\} \cup  \left\{s_2+s_3=0\right\} \cup \left\{s_1+s_2+s_3=0\right\} \cup \bigcup_{i=2}^4\left\{s_i=0\right\},$$
coinciding with what was computed in Example~\ref{ex:hpa}. Notice that $\{s_1=0\}$ does not show up in $S_F$. There are two equivalent ways of explaining this: using the fact that $V(f_1)\setminus V(f_2f_3f_4)$ is isomorphic to $\CC^*$ and thus has zero Euler characteristic, or using the fact that the ray generated by $e_1$ is not rigid. We note that a compactification of $X'$ is given by $\PP^2$. To make the boundary of this compactification SNC we have to blow up the triple intersection points, namely the origin and a point on the hyperplane at infinity. Every ray $\tau$ in the Gr\"obner fan that we found above is then of the form $\PP(\tau^\perp)=H_{E_i}$ for some boundary divisor $E_i$ in this compactification.

Since $X'$ is a hyperplane arrangement complement, it makes sense to investigate the relation between the combinatorics of the arrangement and $S_F$. Recall that the combinatorics of an arrangement are encoded in the its \textit{intersection lattice} $\mathcal{L}$, which is a poset with one vertex for every subspace that can be formed by intersecting some of the hyperplanes in the arrangement. These vertices are referred to as {\em edges} of the arrangement, and  are ordered using the reverse inclusion. An edge $S$ is a \textit{dense edge} if the subposet $\mathcal{L}_{\leq S}$ is not a non-trivial product of two posets. An edge $S$ is a \textit{flacet} if neither $\mathcal{L}_{\leq S}$ nor $\mathcal{L}_{\geq S}$ is a non-trivial product of two posets.  The following diagram depicts the Hasse diagram of the arrangement:
\[\begin{tikzcd}
 & \{(0,0)\}\arrow[d,dash]\arrow[dl,dash]\arrow[dr,dash] & \{(1,0)\}\arrow[dl,dash]\arrow[dr,dash] & \{(1,1)\}\arrow[d,dash]\arrow[dl,dash]\\
 \{f_1=0\}\arrow[drr,dash] & \{f_2=0\}\arrow[dr,dash] & \{f_3=0\}\arrow[d,dash] & \{f_4=0\}\arrow[dl,dash]\\
 & &\CC^2 & 
\end{tikzcd}.\]
We see from this diagram that there are $5$ dense edges: $\{x=0\},$ $\{y=0\},$ $\{x=y\},$ $\{x=1\}$, and $\{(0,0)\}$.

To understand the relation between the codimension-one components of $S_F$ and the intersection lattice, we regard $X'$ as the complement of the projective hyperplane arrangement $\tilde{f}=uv(u-v)(u-w)w$ inside~$\PP^2$ and define $f_5\coloneqq w$. Denote by $\mathcal{L}$ the intersection lattice of the affine arrangement defined by the same equation. By~\cite[Theorem 2.7]{Feichtner2005}, the rays in the Gr\"obner fan on $\Trop(X)\subseteq \RR^4$ are in bijection with the {\em flacets} of $M$. A concrete recipe to compute the rays is as follows: take a flacet~$F$ and  for $i=1,\dots,5$ define $(v_F)_i=1$ if $F\subseteq \{f_i=0\}$ and $(v_F)_i=0$ otherwise. Then add a multiple of $(1,\dots,1)$ to $v_F$ to make the last coordinate equal to~$0$, and then forget the last coordinate. In this example, all dense edges except for $\{f_1=0\}$ are flacets, and it can be verified that the procedure outlined here indeed recovers the $6$ rays that we found before.

Finally, let  $E_1\coloneqq V(f_1)$. We notice that for $\alpha\in \{s_1=0\}$, the form $\dlog f^\alpha|_{E_1}$ equals $(\alpha_2+\alpha_3)dy/y$. It follows that the form $\dlog f^\alpha|_{E_1}$ has a zero on $E_1$ if and only if $\alpha\in\{s_1=s_2+s_3=0\}$. By Corollary~\ref{maincor}, for general such $\alpha$, $e_1 \in Q_{F,\alpha}$. In conclusion, although $E_1$ does not contribute a codimension-one component to $S_F$, it does contribute an embedded component, in the sense that 
$\overline{\pi_2\left(C_{F,\PP^2}\cap \pi_1^{-1}(E_1^\circ)\right)} = \left\{s_1=s_2+s_3=0\right\}.$
\end{example}

\subsection{Maximum likelihood estimation interpretation}\label{sec:MLE}
Suppose that $X$ has non-zero Euler characteristic. Then by Theorem~\ref{thm huh}, for a general data vector $\widetilde{\alpha} $, the function $f^{
\widetilde{\alpha} }$ has exactly $(-1)^{\dim X}\chi(X)$ many critical points. This number is also called the {\em maximum likelihood degree of $X$} and is denoted by $\dml(X)$. We denote the Zariski open subset of $\PP^{p-1}$ for which this statement holds true by~$V$. Consider a rigid ray $\tau$, for which we thus know that $\chi(E_\tau^\circ)\not=0$. Since the $E_\tau^\circ$ are smooth very affine varieties as well, we denote their set of general data vectors by $V_\tau\subseteq \tau^\perp$.
By its very definition, the set $Q_{F,{\widetilde{\alpha} }}$ describes the behavior of critical points of $f^\alpha$ as $\alpha$ approaches $\widetilde{\alpha} $. Keeping this in mind, Corollary~\ref{maincor} states that as $\alpha$ approaches an ${\widetilde{\alpha} }\in V_\tau$ along a curve in $V$, at least one of the critical points of $f^\alpha$ approaches the torus orbit corresponding to $\tau$. In particular, its limit lies in $X^\Sigma \setminus X$.

The asymptotic behavior of critical points as described above is particularly explicit in the $\dml(X)=1$ case, i.e., when the signed Euler characteristic of $X$ is $1$. In this case the maximum likelihood estimate is unique and determined by the rational map
$$\psi\colon \PP^{p-1}\dashrightarrow X,$$
mapping $\alpha$ to the unique critical point of $f^{\alpha}$. Notice that $\psi$ is the rational inverse to the birational morphism $\pi_2:C_{F,Y}\to \PP^{p-1}$. Then $\psi_i \coloneqq f_i\circ \psi$ are rational functions on~$\PP^{p-1}$. We now translate Theorem~\ref{mainthm} into a statement about the $\psi_i$. Note that $v_\tau\in Q_{F,\alpha}$ if and only if there exists a morphism $\gamma:\Delta\to C_{F,X^\Sigma}$ with $\gamma(\Delta^\circ)\in  C_F$, $\gamma(0)\in (X^\Sigma\setminus X)\times\{\alpha\}$, and $\ord_t(\gamma^*(\pi_1^*F))= v_\tau$. The following diagram gives an overview of the various objects and morphisms that we have defined:
\[\begin{tikzcd}
	&& \Delta \\
	&& {C_{F,X^\Sigma}} \\
	{\mathbb{C}^p} & {X^\Sigma} && {\mathbb{P}^{p-1}} \\
	& {\mathbb{T}^\Sigma} \\
	{\mathbb{C}}
	\arrow["\gamma", from=1-3, to=2-3]
	\arrow["{\pi_2}", from=2-3, to=3-4]
	\arrow["\psi"', dashed, from=3-4, to=3-2]
	\arrow["F"', dashed, from=3-2, to=3-1]
	\arrow["{\pi_1}"', from=2-3, to=3-2]
	\arrow["\psi", dashed, from=3-4, to=4-2]
	\arrow[hook, from=3-2, to=4-2]
	\arrow["{t_i}", dashed, from=4-2, to=5-1]
	\arrow["{\mathrm{pr}_i}"', from=3-1, to=5-1]
	\arrow["{f_i}"', dashed, from=3-2, to=5-1]
	\arrow["{\psi_i}", bend left=25, dashed, from=3-4, to=5-1]
\end{tikzcd}.\]
By the definition of $\psi$, the morphisms $\pi_1\circ \gamma$ and $\psi\circ\pi_2\circ\gamma$ coincide.
It follows that
$$\ord_t\left(\left(\psi\circ \pi_2\circ \gamma\right)^*F\right) = v_\tau.$$
In other words, $\ord_t((\psi_i\circ \pi_2\circ \gamma)^*f_i)= (v_\tau)_i$. Since this is true for all $\alpha\in V_\tau$, we conclude that  $\ord_{H_{E_\tau}}(\psi_i) =(v_\tau)_i.$
Moreover, $\psi_i$ has no other zeroes or poles besides the $H_{E_{\tau}}$. To see this, we note that the same argument shows that every additional pole or zero of $\psi_i$ induces a component of $S_F$. Namely, suppose there is an additional variety $R\subseteq \PP^{p-1}$ such that $\ord_R(\psi_i)\not=0$. Then approaching a general point $p$ of $R$ along a curve $\gamma$, $\psi_i
(\gamma(t))$ will approach $0$ or $\infty$, and in particular $\psi(\gamma(t))$ leaves $X$ as $t\to 0$. Hence, $p\in S_F$ and thus $R\subseteq S_F$. However, by Theorem \ref{mainthm} all components of $S_F$ are of the form $H_{E_\tau}$ for some rigid ray $\tau\in \Trop(X)$. We deduce the following proposition.
\begin{proposition}\label{prop MLEdml1}
Let $X$ be a sch\"{o}n very affine variety with $\dml(X)=1$ and $\Sigma$ a fan whose support is $\Trop(X)$ s.t. for all rays $\tau \in \Sigma$, $X^\Sigma \cap \mathcal{O}_\tau$ is connected.
For a rigid ray $\tau \in \Sigma$, let $g_\tau$ be a defining equation of~$\tau^\perp$. Then there exist complex numbers $c_i$ such that
$$
\psi_i = c_i\prod_{\tau \text{ rigid}}g_{\tau}^{\left(v_\tau\right)_i}.
$$
Since $\psi_i$ is homogeneous, the primitive generators of the rigid rays sum up to $0$, i.e.,
$\sum_{\tau\text{ rigid}} v_\tau = 0.$
\end{proposition}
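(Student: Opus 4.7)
The plan is to read off the divisor of each rational function $\psi_i$ on $\PP^{p-1}$ from the preceding discussion and then invoke the fact that a rational function on projective space is determined by its divisor up to a multiplicative constant.

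First, I would consolidate the content of the two paragraphs immediately preceding the statement. Together, the equality $\ord_{H_{E_\tau}}(\psi_i) = (v_\tau)_i$ for every rigid ray $\tau \in \Sigma$, combined with the observation that $\psi_i$ has no other zeros or poles on $\PP^{p-1}$, says precisely that the principal divisor of $\psi_i$ on $\PP^{p-1}$ equals $\sum_{\tau \text{ rigid}} (v_\tau)_i \cdot [H_{E_\tau}]$.

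Next, I would argue as follows. Each $g_\tau$ is a nonzero linear form on $\PP^{p-1}$ whose zero locus is exactly the hyperplane $\PP(\tau^\perp) = H_{E_\tau}$, so the product $\prod_{\tau \text{ rigid}} g_\tau^{(v_\tau)_i}$, viewed as a rational function on $\CC^p$, has precisely the same zero/pole orders along each $H_{E_\tau}$ as $\psi_i$ and no others. Since $\mathrm{Pic}(\PP^{p-1}) \cong \ZZ$ is generated by the class of a hyperplane, two rational functions on $\PP^{p-1}$ with identical divisors differ by a nonzero scalar. Hence $\psi_i = c_i \prod_{\tau \text{ rigid}} g_\tau^{(v_\tau)_i}$ for some $c_i \in \CC^*$. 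The second assertion then follows by a homogeneity count: for the right-hand side to define a rational function on $\PP^{p-1}$ rather than merely on $\CC^p$, the total degree in $s_1, \dots, s_p$ must vanish, and since each $g_\tau$ is homogeneous of degree one, this forces $\sum_{\tau \text{ rigid}} (v_\tau)_i = 0$ for each coordinate $i$, and therefore $\sum_{\tau \text{ rigid}} v_\tau = 0$.

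The substantive work is packaged inside the paragraph preceding the statement rather than in the present proposition itself: the delicate point is that $\psi_i$ has no zeros or poles away from the hyperplanes $H_{E_\tau}$, which rests on the classification of codimension-one components of $S_F$ from Theorem~\ref{mainthm} and Proposition~\ref{main}, together with the fact that $\dml(X)=1$ guarantees the birationality of $\pi_2 \colon C_{F,Y} \to \PP^{p-1}$ and hence the existence of the inverse $\psi$ as a rational map. Once these inputs are in hand, the proposition is a direct application of the divisor-theoretic uniqueness of rational functions on projective space, so I do not expect any real obstacle beyond bookkeeping of signs in the exponents $(v_\tau)_i$.
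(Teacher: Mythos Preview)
Your proposal is correct and matches the paper's approach exactly: the paper presents no separate proof of this proposition, merely writing ``We deduce the following proposition'' after the two paragraphs that establish $\ord_{H_{E_\tau}}(\psi_i)=(v_\tau)_i$ and the absence of other zeros or poles. Your divisor-theoretic argument and homogeneity count are precisely the intended deduction.
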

The fact that the rigid rays sum up to zero is a special property of sch\"on very affine varieties with maximum likelihood degree one, as will be demonstrated in Example~\ref{ex:conic}. Related structure results for general very affine variety with maximum likelihood degree one were obtained in~\cite{Huh14} and~\cite{DMS21}.

\begin{example}\label{ex:hpa2}
We continue Example~\ref{ex:hpa1}. In this case $\chi(X)=1$. An explicit computation of the morphism $\psi$ using {\tt Mathematica} gives the four rational functions
\begin{align*}
\psi_1 &=\frac{s_1+s_2+s_3}{s_1+s_2+s_3+s_4},\quad & \psi_2&= \frac{s_2(s_1+s_2+s_3)}{(s_2+s_3)(s_1+s_2+s_3+s_4)},\\
\psi_3&= \frac{s_3(s_1+s_2+s_3)}{(s_2+s_3)(s_1+s_2+s_3+s_4)}, \quad
&\psi_4&=\frac{-s_4}{s_1+s_2+s_3+s_4},
\end{align*}
as predicted by Proposition~\ref{prop MLEdml1}. Note that the rigid rays sum up to zero.
\end{example}
In order to formulate an analogous property for arbitrary maximum likelihood degree, we make use of the following lemma.
\begin{lemma}\label{lift}
 Let $\tau$ be a rigid ray and $\alpha\in V_\tau$.  Assume that $E_\tau^\circ=X^\Sigma\cap \mathcal{O}_\tau$ is connected and that $\dml(E_\tau^\circ)>0$. Let $S\subseteq \PP^{p-1}$ be an irreducible smooth curve containing $\alpha$ whose generic point lies in~$V$ and which intersects $H_{E_\tau}$ transversely. Then there exists a morphism $\gamma:\Delta\to C_{F,X^\Sigma}$ with $\gamma(\Delta^\circ)\in  C_F$, $\gamma(0)\in E_\tau^\circ\times\{\alpha\}$, $\pi_2(\gamma(\Delta))\subseteq S$, and $\ord_t(\gamma^*(\pi_1^*F))=v_\tau$.
\end{lemma}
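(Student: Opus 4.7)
The plan is to pick a simple zero $x\in E_\tau^\circ$ of $\dlog f^\alpha|_{E_\tau^\circ}$ at which the projection $\pi_2\colon C_{F,X^\Sigma}\to \PP^{p-1}$ is étale, then to lift the curve $S$ through $(x,\alpha)$ via the resulting local isomorphism and read off the vanishing orders from the local equations. Existence of $x$ is immediate: $\alpha\in V_\tau$ and $\dml(E_\tau^\circ)>0$ ensure, via Theorem~\ref{thm huh} applied to $E_\tau^\circ$, that $\dlog f^\alpha|_{E_\tau^\circ}$ has $\dml(E_\tau^\circ)$ simple zeros on $E_\tau^\circ$, and Lemma~\ref{id} promotes each to a point of $C_{F,X^\Sigma}$.

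In local analytic coordinates near $(x,\alpha)$---taking $y_1,\dots,y_n$ on $X^\Sigma$ with $x$ at the origin, $E_\tau=\{y_1=0\}$, and $f_i=u_iy_1^{(v_\tau)_i}$ for units $u_i$, and a coordinate $z_1=\langle v_\tau,\alpha\rangle$ on $\PP^{p-1}$ so that $H_{E_\tau}=\{z_1=0\}$---the computation in the proof of Proposition~\ref{SFBound} shows that $C_{F,X^\Sigma}$ is cut out by $y_1A_1(\alpha,y)+z_1=0$ and $A_j(\alpha,y)=0$ for $j>1$, where $A_j(\alpha,y)=\sum_i\alpha_i\,\partial_{y_j}\!\log u_i(y)$. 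The Jacobian in $y$ at $(x,\alpha)$ is block triangular, with $(1,1)$-entry $A_1(\alpha)(x)$ and lower-right $(n-1)\times(n-1)$ block equal to the Hessian of $\dlog f^\alpha|_{E_\tau^\circ}$ at $x$; the latter is invertible because $x$ is a simple zero, so étaleness of $\pi_2$ at $(x,\alpha)$ is equivalent to $A_1(\alpha)(x)\neq 0$. To realize this for some $x$, I use a dimension count: by Lemma~\ref{id}, $C_{F,X^\Sigma}\cap \pi_1^{-1}(E_\tau^\circ)$ has dimension $p-2$ and fibers $W(\alpha)$ over $V_\tau$, so the closed subvariety cut out by the extra equation $A_1=0$ has dimension at most $p-3$ and projects to a proper closed subset of $V_\tau$; after shrinking $V_\tau$ to the complementary dense open, some $x\in W(\alpha)$ satisfies $A_1(\alpha)(x)\neq 0$. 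Fix such $x$.

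Next, parametrize $S$ near $\alpha$ by $t$ with $z_1(\alpha(t))=ct+O(t^2)$, $c\neq 0$, using transversality of $S$ and $H_{E_\tau}$. Via the étale local isomorphism at $(x,\alpha)$, this lifts uniquely to $\gamma\colon\Delta\to C_{F,X^\Sigma}$ with $\gamma(0)=(x,\alpha)$ and $\pi_2\circ\gamma$ mapping isomorphically onto $S$. Solving $y_1A_1+z_1=0$ gives $y_1\circ\gamma=-ct/A_1(\alpha)(x)+O(t^2)$, so $\ord_t(y_1\circ\gamma)=1$; in particular $\gamma(\Delta^\circ)$ misses $E_\tau$, and since $(x,\alpha)\in E_\tau^\circ$ no other boundary component intervenes, so $\gamma(\Delta^\circ)\subseteq C_F$. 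Finally, $f_i=u_iy_1^{(v_\tau)_i}$ and $u_i(x)\neq 0$ give $\ord_t\gamma^*\pi_1^*f_i=(v_\tau)_i$ for each $i$, i.e., $\ord_t\gamma^*\pi_1^*F=v_\tau$. The main obstacle is precisely the non-vanishing of $A_1(\alpha)(x)$---intrinsically, the class in $N^*_{E_\tau/X^\Sigma,x}$ of $\dlog f^\alpha(x)$ viewed as a section of $\Omega^1_{X^\Sigma}(\log E)$---which is what the dimension count must control.
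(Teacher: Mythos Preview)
Your route via étaleness of $\pi_2$ at $(x,\alpha)$ differs from the paper's and runs into a genuine obstacle. The paper never establishes (or needs) that $\pi_2$ is étale at $(x,\alpha)$. Instead it takes an irreducible component $C$ of the one-dimensional scheme $C_{F,X^\Sigma}\cap\pi_2^{-1}(S)$ passing through $(x,\alpha)$ and shows, by the same local computation you set up, that the scheme-theoretic intersection $C_{F,X^\Sigma}\cap\pi_2^{-1}(S)\cap\pi_1^{-1}(E_\tau)$ is the reduced point $(x,\alpha)$: transversality of $S$ to $H_{E_\tau}$ makes your $z_1$ a uniformizer on $S$ and kills the $\alpha$-direction, while reducedness of the zero scheme of $\dlog f^\alpha|_{E_\tau^\circ}$ (guaranteed by $\alpha\in V_\tau$) handles the $y_2,\dots,y_n$-directions. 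It follows that $y_1$ generates the maximal ideal of $\mathcal{O}_{C,(x,\alpha)}$, so $C$ is smooth there and Cohen's structure theorem produces $\gamma$ with $\ord_t(\gamma^*y_1)=1$. The quantity $A_1(\alpha)(x)$ never enters.

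Your argument, by contrast, needs $A_1(\alpha)(x)\neq 0$ to secure étaleness, and this creates two gaps. First, your dimension count tacitly assumes that $A_1$ is not identically zero on the irreducible $(p-2)$-fold $C_{F,X^\Sigma}\cap\pi_1^{-1}(E_\tau^\circ)$; you do not verify this. Second, and more seriously, even granting the count you only obtain $A_1(\alpha)(x)\neq 0$ for $\alpha$ in a dense open \emph{inside} $V_\tau$, so you must shrink $V_\tau$. The lemma, however, is asserted for every $\alpha\in V_\tau$, and the paper's argument delivers exactly that, using only the simple-zero property that defines $V_\tau$. The transversality hypothesis on $S$ is what replaces your étaleness condition: it is precisely this hypothesis that forces the triple intersection to be reduced regardless of whether $A_1(\alpha)(x)$ vanishes.
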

\begin{proof}
Since $\alpha\in V_\tau$ and $\dml(E_\tau^\circ)>0$, $\dlog f^\alpha|_{E_\tau}$ has a non-empty, zero-dimensional, reduced zero-scheme $Z\subseteq E_\tau^\circ$, by Theorem \ref{thm huh}. Let $x\in Z$. We conclude that $(x,\alpha)\in \pi_2^{-1}(S)$. Let $C$ be an irreducible component of $C_{F,X^\Sigma}\cap \pi_2^{-1}(S)$ passing through $(x,\alpha)$. A local computation reveals that 
$$C_{F,X^\Sigma}\cap \pi_2^{-1}\left(S\right)\cap \pi_1^{-1}\left(E_\tau\right) = C_{F,X^\Sigma}\cap \pi_2^{-1}(\{\alpha\})\cap \pi_1^{-1}\left(E_\tau\right) =Z.$$
It follows that $C\cap \pi_1^{-1}(E_\tau)\subseteq Z$ is zero-dimensional, reduced, and non-empty since it contains $(x,\alpha)$. Hence any local equation $g$ for $\pi_1^{-1}(E_\tau)$ around $(x,\alpha)$ gives a generator for the maximal ideal of $\mathcal{O}_{C,(x,\alpha)}$. By Cohen's structure theorem, $t\mapsto g$ gives an isomorphism $\CC\llbracket t\rrbracket\cong \widehat{\mathcal{O}_{C,(x,\alpha)}}$ giving rise to the morphism $\gamma:\Delta\to C_{F,X^\Sigma}$ that we are looking for, since by construction $\ord_t(\gamma^* E_\tau)=1$ and thus $\ord_t(\gamma^*(\pi_1^*F))=v_\tau$.
\end{proof}
\begin{remark}
The difference between this construction and the construction in the proof of Corollary~\ref{qfa} is the fact that in this lemma we start by specifying a curve in $\PP^{p-1}$ along which we approach $\alpha$ and then lift it to $C_{F,X^\Sigma}$. Note that the number of components of $\pi_2^{-1}(S)$ passing through $(x,\alpha)$ is closely related to \cite[Conjecture 3.19]{HS14}, which essentially predicts that there is only a single component passing through~$(x,\alpha)$.
\end{remark}
We deduce the following consequence for maximum likelihood estimation. Let \mbox{$\alpha\in V_\tau$} and  $\gamma:\Delta\to \PP^{p-1}$ with $\gamma(\Delta^\circ)\in V$ approaching $\alpha$. Let  $S$ be a system of equations in $ \CC[t_1,\dots,t_p,s_1,\dots,s_p]$ defining $C_F$. Then substitute the components of $\gamma$ for the $s_i$-variables. The  system $S'$ obtained like that consists of equations in $\CC\llbracket t\rrbracket[t_1,\dots,t_p]$. A solution $\tilde{\gamma}\in \CC(\!(t)\!)^p$ of this system is a solution of the equation
$$\dlog f^{\gamma(t)}(\tilde{\gamma}(t)) =  0.$$
 Lemma~\ref{lift} assures that the system $S'$ has a solution and  that moreover it has a solution for which $\ord_t(\tilde{\gamma}_i)=(v_\tau)_i$ for all $i=1,\dots,p$. In practice, in order to approximate such a solution, one does a formal power series substitution $x_i=\sum_{j=(v_\tau)_i}^\infty c_{i,j}t^j$ into the system $S'$ and iteratively solves for the $c_{i,j}$. 
\begin{example}\label{ex:conic}
Let $g\in \CC[x,y]$ be a generic conic through $(0,0)$ and write $g=l+q$ as a sum of a linear and a quadratic part. We embed $\CC^2$ into $\CC^3$ via the tuple $F=(x,y,g)$  and denote, as usual, $f=xyg$. Denote by $X\coloneqq F(\CC^2) \cap  (\CC^*)^3$ the intersection of its image with $(\CC^*)^3$. $X$ is a hypersurface in $(\CC^*)^3$ cut out by the polynomial
$h \coloneqq  t_3 - g(t_1,t_2).$
Since $g$ is generic for its Newton polygon, $X$ is sch\"on (see \cite[Section 2]{Hov77}). The rigid rays are given by $e_1,e_2,e_3,e_1+e_2+e_3$, and $-e_1-e_2-2e_3$. 
We remark that $\dml(X)=3$ and that the rigid rays do not sum up to zero, demonstrating that the assumption on the maximum likelihood degree in Proposition~\ref{prop MLEdml1} is indeed necessary.
We now illustrate Lemma~\ref{lift} for the sch\"{o}n very affine variety $X$ as above. Consider for example the ray $\tau=-e_1-e_2-2e_3$. The initial ideal $\init_{(-1,-1,-2)}(h)$ is generated by $t_3-q(t_1,t_2)$. By Lemma~\ref{lemmatorus}, it follows that $X^\Sigma\cap \mathcal{O}_\tau$ is a copy of $\PP^1$ minus $4$ points, and thus has Euler characteristic~$-2$. By Theorem~\ref{thm huh} and Corollary~\ref{maincor}, this means that as one approaches a general $\alpha\in \{s_1+s_2+2s_3=0\}$, at least two of the three maximum likelihood estimates of $\dlog t^\alpha$ will approach the torus orbit~$\mathcal{O}_\tau$. 
To make this more explicit, consider for instance $g=x+y+x^2+xy+y^2$, which turns out to be generic enough. Take the point $(2,1,-3/2)\in \tau^\perp$ and the curve $t\mapsto (2+t,1+t,-3/2)$ approaching it. We now notice that $X$ equipped with the tuple $(t_1,t_2,t_3)$ is isomorphic, via the map $F$, to $\CC^2\setminus V(f)$ with the tuple $(x,y,g)$. To make our computations simpler we will work with the latter. On $\CC^2$ we compute that the two components of $\dlog f^{\gamma(0)}$ are zero on $\CC^2\setminus V(f)$ if and only if the following two equations are fulfilled:
\begin{align*}
x + 2tx - 2x^2 + 2tx^2 + 4y + 2ty + xy + 2txy + 4y^2 + 
2ty^2&=0,\\
2x + 2tx + 2x^2 + 2tx^2 - y + 2ty - xy + 2txy - 4y^2 + 
2ty^2&=0.
\end{align*}
The fact that $v_\tau=(-1,-1,-2)\in Q_{F,\alpha}$ implies that this system has a solution $(\eta_1,\eta_2)$ in $t^{-1}\CC\llbracket t\rrbracket$, for which $g(\eta_1,\eta_2)=c_{-2}t^{-2}+\text{ higher order terms}$.
 With {\tt Mathematica}, we compute that we indeed have a solution with
\begin{align*}
\eta_1= \frac{-7+\sqrt{33}}{(15-\sqrt{33})t}+O\left(1\right),\quad
\eta_2= \frac{-13+3\sqrt{33}}{(60-4\sqrt{33})t}+O\left(1\right).
\end{align*}
This solution converges to $(1:\frac{1}{8}(-1-\sqrt{33}):0)\in \PP^2$, which is one of the three critical points of $f^\alpha$ on $\PP^2$. The other ones are $p_2=(1:\frac{1}{8}(-1+\sqrt{33}):0)$ and $p_3=(3:-3:1)$. For $p_2$, we can construct a similar curve to the one found above. The point $p_3$ is a point in $X$ and thus we get a solution in $\CC\llbracket t\rrbracket$ whose first terms are
$$(3-74 t+3508 t^2+O\left(t^3\right),  -3+62 t-2948 t^2+O\left(t^3\right)).$$
\end{example}
We remark that in the previous example the following equality holds:
$$\sum_{\tau\text{ rigid}}\chi\left(E_\tau^\circ\right)v_\tau = 0,$$
in analogy to the second equation in~Proposition~\ref{prop MLEdml1}. It would be interesting to study if this equality holds true in general.

\section{Bernstein--Sato ideals}\label{section BernsteinSato}
In this section, we investigate Bernstein--Sato ideals and the codimension-one components of their vanishing sets. We explain how those can partially be recovered in terms of $Q_{F,\alpha}$ and formulate a conjecture relating those codimension-one components to log-canonical threshold polytopes.

\subsection{Slopes of Bernstein--Sato ideals}
Let $Y\subseteq \CC^p$ be a smooth closed subvariety the affine space. We consider the tuple of regular functions $G=(g_1,\dots,g_p)$ on $Y$ consisting of the restriction of the  coordinates $t_1,\dots,t_p$ on $\CC^p$ to $Y$. We denote their product by $g\coloneqq \prod_{i=1}^pg_i$. We denote by $X$ the very affine variety $Y\cap(\CC^*)^p$ and on it we consider the tuple of nowhere vanishing functions $F=(f_1,\dots,f_p)$ consisting of the restriction of the coordinates $t_1,\dots,t_p$ on $\CC^p$ to~$X$. For a smooth affine algebraic variety with a tuple of regular functions, the Bernstein--Sato ideal is defined as follows.
\begin{definition}
The {\em Bernstein--Sato ideal} of the tuple $G$ on $Y$, denoted by $B_G$, is the ideal consisting of all polynomials $b\in\CC[s_1,\dots,s_p]$ for which there exists a differential operator $P(s_1,\dots,s_p)\in H^0(Y,\mathcal{D}_{Y}[s_1,\dots,s_p])$ such that
$$P\bullet \left( g_1^{s_1+1}\cdots g_p^{s_p+1} \right) = b\cdot g_1^{s_1}\cdots g_p^{s_p},$$
where $\mathcal{D}_{Y}[s_1,\dots,s_p]\coloneqq \mathcal{D}_Y \otimes_{\mathcal{O}_Y} \mathcal{O}_Y[s_1,\ldots,s_p]$ is the sheaf of algebraic linear partial differential operators on~$Y$ with formal variables $s_1,\dots,s_p$ adjoined. 
\end{definition}
Sabbah~\cite{Sabbah} showed that every codimension-one component of $V(B_G)$ is an affine hyperplane. The set of {\em Bernstein--Sato slopes of $G$}, denoted by $\BS_{G}$, is defined to be the union of these hyperplanes after translating them to the origin. Since this is a homogeneous variety by definition, we will also consider the projective version of this variety, denoted by $\PP(\BS_G)\subseteq \PP^{p-1}$.
\begin{theorem}[{\cite[R\'esultat 6]{Mai16}}]\label{thm maisonobe}
Let
$$W_G \coloneqq   \left\{ \left(\sum_{i=1}^p \alpha_i\frac{dg_i}{g_i}(x),\alpha\right)\mid x\in X, \alpha\in\CC^p \right\} \subseteq T^*X\times\CC^p.$$
Then
$$\BS_G = \pi_2\left(\overline{W_G}^{T^*Y\times \CC^p}\cap V\left(\pi_1^*\left(\pi^*g\right)\right)\right),$$
where $\pi_1,\pi_2$ are the first and second projection from $T^*Y\times\CC^p$, and $\pi:T^*Y\to Y$ is the natural map.
\end{theorem}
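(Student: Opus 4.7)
My plan is to prove this via the $\mathcal{D}$-module theoretic interpretation of Bernstein--Sato ideals, which is the framework Maisonobe works in. First, I would introduce the $\mathcal{D}_Y[s_1,\dots,s_p]$-module
$$\mathcal{M}_G \coloneqq \mathcal{D}_Y[s_1,\dots,s_p] \cdot g_1^{s_1}\cdots g_p^{s_p},$$
viewed (via its $\mathcal{O}_Y[s]$-structure) as a coherent sheaf on $Y\times \CC^p$. By definition, $B_G$ is the annihilator in $\CC[s_1,\dots,s_p]$ of the quotient $\mathcal{M}_G / \mathcal{D}_Y[s]\cdot g \cdot g^s$, so $V(B_G)$ is the projection to $\CC^p$ of the support of this quotient in $Y\times \CC^p$. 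This reduction translates the problem into computing the support of a coherent $\mathcal{O}_Y[s]$-module.

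Second, I would identify $\overline{W_G}$ as the characteristic variety of $\mathcal{M}_G$ in a suitable \emph{relative} sense, i.e., as a $\mathcal{D}_Y$-module with parameters $s$. The natural good filtration by order of differential operators realizes the principal symbol of $P\bullet g^s$ as the composition of $\dlog g^\alpha$ with the symbol of $P$. Consequently the relative characteristic variety of $\mathcal{M}_G$, when embedded into $T^*Y\times\CC^p$, is exactly the closure of the graph
$$\left\{\left(\sum_{i=1}^p\alpha_i\tfrac{dg_i}{g_i}(x),\alpha\right)\mid x\in X,\ \alpha\in \CC^p\right\} = W_G.$$
This is the parametric analogue of the classical identification of the characteristic variety of $\mathcal{D}_Y f^s$ with the conormal of $V(f)$.

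Third, I would match the codimension-one components of $V(B_G)$ with the projection in the statement. The key point is that $B_G$ records exactly where the localization map $\mathcal{M}_G \to \mathcal{M}_G[g^{-1}]$ fails to be an isomorphism; the support of its kernel and cokernel is concentrated along $\{g=0\}$ in $Y$. Passing to associated graded with the filtration of step two, this support is computed as the intersection of the relative characteristic variety with the preimage of $\{g=0\}$, namely $\overline{W_G}\cap V(\pi_1^*\pi^*g)$. Projecting to $\CC^p$ and extracting hyperplanes through the origin yields $\BS_G$.

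The main obstacle is step two: to identify the relative characteristic variety with $\overline{W_G}$ one needs a Sabbah-style involutivity/coherence argument controlling the limit as the symbol calculus is performed with $s$-parameters. A second subtlety is justifying that taking supports commutes with projection for the codimension-one part (one only gets the slopes, not the affine translates), which requires that generic fibers of $\pi_2$ over $\BS_G$ pick up exactly the components of $\overline{W_G}\cap \pi_1^{-1}\pi^{-1}V(g)$ of dimension one higher than generic fibers, an argument that uses Sabbah's theorem on the codimension of $V(B_G)$.
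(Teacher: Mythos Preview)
The paper does not prove this theorem: it is quoted verbatim as \cite[R\'esultat 6]{Mai16} and used as a black box. There is therefore no ``paper's own proof'' to compare your proposal against. Your sketch is a plausible outline of the $\mathcal{D}$-module argument behind Maisonobe's result (identify the relative characteristic variety of $\mathcal{D}_Y[s]g^s$ with $\overline{W_G}$, then read off the slopes from the support over $V(g)$), but the paper itself makes no attempt at this and simply invokes the cited reference.
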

Note that the description of $\BS_G$ in this theorem is very similar to the definition of~$S_F$. There are two differences between the objects.
In the definition of $S_F$, the second factor is equal to $\PP^{p-1}$ rather than~$\CC^p$. The second difference is the fact that in the definition of $S_F$, the set $C_F$ is closed inside a compactification of~$X$, whereas in Theorem~\ref{thm maisonobe}, the closure of $W_G$ is taken inside the non-compact affine variety~$Y$. Hence there will be contributions to $S_F$ from boundary components at infinity that are not relevant from the perspective of the Bernstein--Sato ideal. The following lemma explains how to identify contributions to $\BS_G$ in terms of~$Q_{F,\alpha}$.

\begin{lemma}\label{thm critical BS}
Let $\alpha\in \PP^{p-1}$ and denote by $L_\alpha\subseteq \CC^p$ the line through the origin corresponding to $\alpha$. If $Q_{F,\alpha}\cap \ZZ_{\geq 0}^p\not=\emptyset$, then $L_\alpha\subseteq \BS_G$.
\end{lemma}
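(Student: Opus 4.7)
The plan is to exhibit, for every scalar $c\in\CC$, an explicit point of $\overline{W_G}^{T^*Y\times\CC^p}\cap V(\pi_1^*\pi^*g)$ projecting to $c\tilde\alpha$, where $\tilde\alpha\in\CC^p$ is a fixed lift of $\alpha$, and then to invoke Maisonobe's description (Theorem~\ref{thm maisonobe}). By Lemma~\ref{independence} the set $Q_{F,\alpha}$ may be computed with respect to any compactification of $X$, so I will fix $\bar Y$ to be the projective closure of $Y\subseteq\CC^p\subseteq\PP^p$. Given $v\in Q_{F,\alpha}\cap\ZZ_{\geq 0}^p$, pick a morphism $\gamma\colon \Delta\to \bar Y\times\PP^{p-1}$ with $\gamma(\Delta^\circ)\subseteq C_F$, $\gamma(0)\in(\bar Y\setminus X)\times\{\alpha\}$, and $\ord_t(\gamma^*(\pi_1^*f_i))=v_i$ for each $i$. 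The hypothesis $v_i\geq 0$ says exactly that no coordinate $g_i$ acquires a pole along $\pi_1\gamma$ at $t=0$; hence $\pi_1\gamma$ factors through a morphism $\Delta\to Y$ with $\pi_1\gamma(0)\in Y\setminus X=V(g)\cap Y$.

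Next, for arbitrary $c\in\CC$, I will lift the projective curve $\pi_2\gamma$ to a morphism $\bar\gamma\colon\Delta\to\CC^p$ with $\bar\gamma(0)=c\tilde\alpha$, by rescaling an arbitrary local lift (and, when $c=0$, additionally multiplying by $t$). Because $\bar\gamma(t)$ and any homogeneous lift of $\pi_2\gamma(t)$ differ by a nonzero scalar factor on $\Delta^\circ$, the critical-point equation defining $C_F$ translates into
$$\sum_{i=1}^p \bar\gamma_i(t)\frac{dg_i}{g_i}(\pi_1\gamma(t))=0\quad\text{for every }t\in\Delta^\circ.$$
Consequently the morphism $\Delta\to T^*Y\times\CC^p$ given by $t\mapsto (0_{\pi_1\gamma(t)},\bar\gamma(t))$ is regular, lies in $W_G$ over $\Delta^\circ$, and sends $0$ to $(0_{\pi_1\gamma(0)},c\tilde\alpha)$. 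This limit point lies in $\overline{W_G}^{T^*Y\times\CC^p}$ by Lemma~\ref{closureLemma} and in $V(\pi_1^*\pi^*g)$ because $g(\pi_1\gamma(0))=0$. Projecting onto the second factor and applying Theorem~\ref{thm maisonobe} then yields $c\tilde\alpha\in\BS_G$ for every $c\in\CC$, hence $L_\alpha\subseteq\BS_G$.

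The step I expect to require the most care is the recognition that the \emph{a priori} meromorphic expression $\sum_i\bar\gamma_i(t)\frac{dg_i}{g_i}(\pi_1\gamma(t))$ is, thanks to the critical-point identity, identically the zero section of $T^*Y$ along the curve $\pi_1\gamma$: individual summands can have genuine poles in $t$ where $g_i\circ\pi_1\gamma$ vanishes, but they cancel in the total. It is this cancellation that allows the map into $T^*Y\times\CC^p$ to extend regularly across $t=0$ and the limit to be computed componentwise as $(0_{\pi_1\gamma(0)},c\tilde\alpha)$. Everything else reduces to careful bookkeeping of the scalar ambiguity in lifting $\pi_2\gamma$ from $\PP^{p-1}$ to~$\CC^p$.
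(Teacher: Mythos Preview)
Your proposal is correct and follows essentially the same route as the paper: pick a compactification of $Y$, use the positivity of the order vector to conclude that $\pi_1\gamma$ lands in $Y$ with limit in $V(g)$, lift $\pi_2\gamma$ to an affine curve hitting the prescribed point of $L_\alpha$, and observe that the resulting curve in $T^*Y\times\CC^p$ sits in the zero section and hence in $W_G$ over $\Delta^\circ$, so its limit lies in $\overline{W_G}\cap V(\pi_1^*\pi^*g)$ and Maisonobe's theorem applies. The only cosmetic differences are that the paper works with an arbitrary compactification rather than the projective closure, and phrases the lift via local triviality of the tautological bundle; your extra care with the case $c=0$ is harmless but unnecessary, since $\BS_G$ is closed and homogeneous.
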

\begin{proof}
Let $i:Y\hookrightarrow \overline{Y}$ be a compactification of $Y$, and thus also of $X$. By the assumption, there exists $\gamma:\Delta \to \overline{Y}\times\PP^{p-1}$ with $\gamma(\Delta^\circ)\in  C_F$, $\gamma(0)\in \overline{Y}\setminus X\times\{\alpha\}$, and such that moreover $\ord_t(\gamma^*(\pi_1^*f_i))\geq 0$ for $i=1,\dots,p$. Notice that $\gamma(\Delta^\circ)\in  C_F$ implies that $\pi_1(\gamma(\Delta^\circ))\in X\subset (\CC^*)^p$. Then the fact that $\ord_t(\gamma^*(\pi_1^*f_i))\geq 0$ implies that $\pi_1(\gamma(0))\in \overline{Y}\cap \CC^p=Y$. This means that $\pi_1(\gamma(0))\in Y\setminus X=V(g)$. 
Now let $\tilde{\alpha}\in L_\alpha$. By the local triviality of the tautological line bundle, we get a lift $\eta:\Delta \to \CC^p$ of $\pi_2\circ \gamma$ with $\eta(0)=\tilde{\alpha}$. Then we obtain
$$\Delta  \stackrel{(\pi_1\circ \gamma,\eta)}{\longrightarrow}  Y\times\CC^p \longrightarrow T^*_YY\times\CC^p,$$
where $T^*_YY$ denotes the zero section of $T^*Y$. By construction, for $(y,\beta)$ in the image of $(\pi_1 \circ \gamma,\eta)$, $\dlog g^\beta(y)$ is in $T_Y^*Y.$ Hence the image of the morphism  $(\pi_1 \circ \gamma,\eta)$ lies in~$\overline{W_G}$. Since $\pi_1(\gamma(0))\in V(g)$, we conclude that 
$(\pi_1(\gamma(0)),\eta(0))\in \overline{W_G}\cap V(g)$
and thus, by Theorem~\ref{thm maisonobe}, $\tilde\alpha\in \BS_{G}$.
\end{proof}
We denote by $\PP(\BS_G)$ the projectivization of the hyperplanes in $\BS_G$, and by $X^{\Sigma}$ a tropical compactification of $X$ as in Section~\ref{section MLEveryaffine}.
\begin{theorem}\label{bs-sf}
Assume that $X$ is sch\"on and that for all $\tau\in \Sigma$ the intersection $X^{\Sigma}\cap \mathcal{O}_\tau$ is connected. Then the codimension-one irreducible components of $S_F\cap \mathbb{P}(\BS_G)$ are exactly the hyperplanes $\mathbb{P}(\tau^\perp)$ for $\tau$ a rigid ray contained in~$\RR_{\geq 0}^p$.
\end{theorem}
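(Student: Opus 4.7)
The plan is to reduce to a ray-by-ray analysis using Proposition~\ref{main} and the fact (recalled in the introduction) that the codimension-one components of the Bernstein--Sato variety are affine hyperplanes, to prove the easy inclusion with Lemma~\ref{thm critical BS}, and to handle the reverse inclusion via Maisonobe's Theorem~\ref{thm maisonobe} together with a local log-coordinate analysis of $\dlog f^{\alpha}$.

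Since $\mathbb{P}(\BS_G)$ is a union of hyperplanes, every codimension-one component of $S_F\cap\mathbb{P}(\BS_G)$ is itself a hyperplane contained in $S_F$, hence equal to $\mathbb{P}(\tau^\perp)$ for some rigid ray~$\tau$ by Proposition~\ref{main}. The theorem therefore reduces to showing, for each rigid~$\tau$, that $\mathbb{P}(\tau^\perp)\subseteq\mathbb{P}(\BS_G)$ if and only if $\tau\subseteq\RR_{\geq 0}^p$. The $\supseteq$ direction is immediate: if $v_\tau\in\ZZ_{\geq 0}^p$, then for general $\alpha\in\mathbb{P}(\tau^\perp)$ Proposition~\ref{main} gives $v_\tau\in Q_{F,\alpha}\cap\ZZ_{\geq 0}^p$, so Lemma~\ref{thm critical BS} yields $\alpha\in\mathbb{P}(\BS_G)$, and density together with closedness give $\mathbb{P}(\tau^\perp)\subseteq\mathbb{P}(\BS_G)$.

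For the reverse direction, assume $\mathbb{P}(\tau^\perp)\subseteq \mathbb{P}(\BS_G)$ with $\tau$ rigid, and pick $\alpha$ very general in $\mathbb{P}(\tau^\perp)$ so that $\alpha\notin \mathbb{P}(\sigma^\perp)$ for every other rigid ray~$\sigma$. For generic $\tilde\alpha\in L_\alpha$, Theorem~\ref{thm maisonobe} provides $(\xi,\tilde\alpha)\in\overline{W_G}^{T^*Y\times\CC^p}$ with $y:=\pi(\xi)\in V(g)\cap Y$. Using Lemma~\ref{closureLemma} and the injectivity of the parametrisation $(x,\alpha)\mapsto(\dlog g^\alpha(x),\alpha)$ of $W_G$, I would lift this to an arc $(x(t),\alpha(t))\in X\times\CC^p$ with $x(t)\to y$ and $\alpha(t)\to \tilde\alpha$. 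Since $y\in\CC^p$, every coordinate $f_i(x(t))=g_i(x(t))$ stays bounded, so the order vector $w:=(\ord_t f_i(x(t)))_i$ lies in $\ZZ_{\geq 0}^p$. Extending $x(t)$ to $\bar x\colon\Delta\to X^\Sigma$, the limit $\bar x(0)$ sits in a boundary stratum $E_\sigma$ with $\sigma$ the minimal cone of $\Sigma$ containing $w$ in its relative interior. The local expression~\eqref{dloglocal} for $\dlog f^{\alpha(t)}$ around $\bar x(0)$ writes it as a regular part plus $\sum_{j} q_{\tau_j}(\alpha(t))\,\frac{dy_j}{y_j(x(t))}$, with the sum over the rays $\tau_j$ of $\sigma$. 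Convergence of $\dlog g^{\alpha(t)}(x(t))$ forces each $q_{\tau_j}(\alpha(t))/y_j(x(t))$ to stay bounded as $y_j(x(t))\to 0$, so $q_{\tau_j}(\tilde\alpha)=0$ in the limit and $\alpha\in\bigcap_j H_{E_{\tau_j}}$. Genericity of $\alpha$ then forces $\tau_j=\tau$ throughout, hence $\sigma=\tau$ and $w\in\ZZ_{>0}\cdot v_\tau$; together with $w\in\ZZ_{\geq 0}^p$ this gives $v_\tau\in\ZZ_{\geq 0}^p$.

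The main obstacle is identifying Maisonobe's convergence in $T^*Y$ at~$y$ with the log-frame convergence on $X^\Sigma$ around $\bar x(0)$, so that the residue computation via Equation~\eqref{dloglocal} is justified. In the sch\"on setting one expects to bridge the two descriptions either by refining $\Sigma$ (while keeping the support $\Trop(X)$ and the SNC compactification property) so that $Y$ embeds as an open subvariety of $X^\Sigma$ in a neighbourhood of~$y$, or by passing to a common log resolution of $(Y,V(g))$ and $X^\Sigma$; the connectedness hypothesis on $X^\Sigma\cap\mathcal{O}_\tau$ then ensures that a single ray~$\tau$ governs the asymptotic behaviour along the relevant component.
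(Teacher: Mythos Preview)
Your forward inclusion and the reduction to a ray-by-ray statement are correct and coincide with the paper's argument. The difference lies entirely in how the reverse inclusion is handled.

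The paper does \emph{not} attempt to extract a constraint on the ray from Maisonobe's description directly. Instead, after refining $\Sigma$ to a fan $\Sigma'$ compatible with the standard fan of $\CC^p$ (exactly the refinement you suggest at the end), it removes the boundary divisors corresponding to rays outside $\RR_{\geq 0}^p$ to obtain a proper birational morphism $\dot X\to Y$ which is a log resolution of $(Y,V(g))$. At that point the paper invokes \cite[Lemma~4.4.6]{bvwz2}, which says that every Bernstein--Sato slope is orthogonal to some ray of $\Sigma'\cap\RR_{\geq 0}^p$. Combining this with Proposition~\ref{main} immediately gives that any codimension-one component of $S_F\cap\PP(\BS_G)$ is $\PP(\tau^\perp)$ for a \emph{rigid} ray $\tau\subseteq\RR_{\geq 0}^p$. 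No cotangent-bundle convergence analysis is needed.

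Your approach is more hands-on: you try to reprove the relevant special case of \cite[Lemma~4.4.6]{bvwz2} by pulling an arc out of $\overline{W_G}$ and reading off residues in the log frame of $X^\Sigma$. The obstacle you flag is genuine and is precisely where the argument is incomplete: convergence of $\dlog g^{\alpha(t)}(x(t))$ in $T^*Y$ gives boundedness of its components in a frame $du_1,\dots,du_n$ of $\Omega^1_Y$ near $y$, not in the frame $dy_1,\dots,dy_n$ on $X^\Sigma$ near $\bar x(0)$. The two are related by the Jacobian of the rational map $X^\Sigma\dashrightarrow Y$, which need not be regular at $\bar x(0)$. Your fix---refining to $\Sigma'$ so that the cone $\sigma$ containing $w$ lies in $\RR_{\geq 0}^p$, whence $\dot X\to Y$ is a genuine morphism near $\bar x(0)$---does make the Jacobian entries $\partial u_k/\partial y_j$ regular, and then boundedness in the $du_k$-frame transfers to boundedness in the $dy_j$-frame, validating your residue step. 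Two further points would still need care: (i) the genericity condition on $\alpha$ must exclude $\PP(\rho^\perp)$ for \emph{all} rays $\rho\neq\tau$ of the refined fan $\Sigma'$, not only the rigid ones, so fix $\Sigma'$ before choosing $\alpha$; (ii) the refinement must be chosen so that $X^{\Sigma'}$ remains smooth with SNC boundary, which is possible by \cite[proof of Theorem~2.5]{HomTropVar} as the paper also notes. With these details in place your argument would go through, but as written it stops at exactly the step the paper bypasses by citing \cite{bvwz2}.
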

\begin{proof}
By Proposition~\ref{main}, the codimension-one components of~$S_F$ are exactly the hyperplanes $\mathbb{P}(\tau^\perp)$ for $\tau$ a rigid ray. Again by Proposition~\ref{main}, all $\alpha \in \mathbb{P}(\tau^\perp)$  satisfy the condition of Lemma~\ref{thm critical BS}. Thus, $L_\alpha$ is contained in  $\BS_G$ and thus $\alpha$ lies in $\mathbb{P}(\BS_G)$. Hence $ \mathbb{P}(\tau^\perp)\subseteq \mathbb{P}(\BS_G).$   We now show that these indeed recover {\em all} components in the intersection.
As in Section~\ref{section MLEveryaffine}, denote by $\Sigma$  a fine enough fan structure on $\Trop(X)$ such that the closure $X^\Sigma$ of $X$ in the associated toric variety is smooth and the boundary $X^{\Sigma}\setminus X$ is a SNC divisor. Starting from an arbitrary fan we can always refine it to obtain a fan satisfying this condition, as in \cite[proof of 2.5]{HomTropVar}.

Denote by $\Sigma_{\CC^p}$ the standard fan for $\CC^p$. Let $\Sigma'$ be a refinement of $\Sigma$ of the type mentioned above for which the following is true: if the relative interiors of two cones $\sigma_1\in \Sigma'$ and $\sigma_2\in \Sigma_{\CC^p}$ intersect, then $\sigma_1\subseteq \sigma_2$. Denote by $X^{\Sigma'}$ the closure of $X$ in the associated toric variety~$\TT^{\Sigma'}$.
Let $\dot{X}$ be the variety obtained by removing from $X^{\Sigma'}$ all divisors corresponding to rays in $\Sigma'\setminus \RR_{\geq 0}^p$. This gives a log-resolution $\mu\colon \dot{X}\to Y$ of the divisor of $g$ inside~$Y$. Moreover, the irreducible components of the divisor $\mu^{-1}(V(g))$ are in bijection with the rays in $\Sigma'\cap \RR_{\geq 0}^p$. By \cite[Lemma 4.4.6]{bvwz2}, the irreducible components of $\BS_G$ are among the hyperplanes orthogonal to the rays in $\Sigma'\cap \RR_{\geq 0}^p$.
Let $C$ be any component of $S_F\cap \mathbb{P}(\BS_G)$. Since $C$ is in $\PP(\BS_G)$, we find that $C=\PP(\tau^\perp)$ for some ray $\tau\in \Sigma'\cap \RR_{\geq 0}^p$. Since $C$ is also a component of $S_F$, this ray must be rigid. It follows that $C$ is indeed of the form $\PP(\tau^\perp)$ for some rigid ray in $\RR_{\geq 0}^p$, concluding the proof.
\end{proof}
\begin{example}\label{example incomparable}
We pick up Example~\ref{ex:hpa2}. Using the library {\tt dmod.lib}~\cite{dmodlib} in the computer algebra system {\tt Singular} \cite{Singular}, we compute the Bernstein--Sato slopes to be
$$\BS_G =  \{s_1+s_2+s_3=0\}\cup\bigcup_{i=1}^4\{s_i=0\}.$$
It follows that $S_F$ intersected with the projectivized hyperplanes of $\BS_G$ equals
$$S_F\cap \PP\left(\BS_G\right) =  \left\{s_1+s_2+s_3=0\right\}  \cup \bigcup_{i=2}^4\left\{s_i=0\right\}.$$
In particular, the components $\{s_1+s_2+s_3+s_4=0\}$ and $\{s_2+s_3=0\}$ are contained in~$S_F$ but not in~$\BS_G$. This is explained by Theorem~\ref{bs-sf} since the rigid rays introducing these components to $S_F$ are $-e_1-e_2-e_3-e_4$ and $-e_2-e_3$, which are not contained in~$\RR_{\geq 0}^4$. 
On the other hand, the component $\{s_1=0\}$ is contained in $\BS_G$ but not in $S_F$. This shows that in general the sets of components of $\BS_G$ and $S_F$ are incomparable.  
Finally, we combine Theorem~\ref{bs-sf}  with Proposition \ref{prop MLEdml1}. We then find that the linear forms appearing in the numerators and denominators of the MLE as determined in Example~\ref{ex:hpa2} are among the defining equations for the hyperplanes in~$\BS_G$. This explains the geometry behind the observation made in~\cite{SatStu19} suggesting a link between the MLE and the Bernstein--Sato ideal of the parametrization of the algebraic model of the statistical experiment.
\end{example}

\subsection{LCT-polytopes}\label{subsection LCT}
In the previous subsection, we partially recovered the Bernstein--Sato slopes in terms of the critical slopes of the very affine variety. We now formulate a conjecture related to the translations using {\em log-canonical threshold} (LCT) polytopes.
\begin{definition}
Let $\mu:Y'\to Y$ be a log-resolution of $V(g)$ with $\mu^{-1}(V(g))=\bigcup_{i=1}^{q} E_i$. Denote by $a_{ij}\coloneqq \ord_{E_i}(g_j)$ and by $k_i\coloneqq \ord_{E_i}(K_{Y'/Y})+1$, where $K_{Y'/Y}$ denotes the relative canonical divisor.
The {\em LCT-polytope} of $G$ is 
$$\LCT(G)  \coloneqq  \left\{(s_1,\dots,s_p)\in \RR_{\geq 0}^p\mid \sum_{j=1}^p a_{ij}s_j\leq k_i \text{ for }i=1,\dots,q\right\} \subseteq  \RR_{\geq 0}^p.$$
\end{definition}
We call an affine hyperplane $H\subseteq \CC^p$ \textit{facet-defining} if $\dim (H\cap \LCT(G))=p-1$ and $H\cap \LCT(G)\subseteq \partial(\LCT (G))$. The facet-defining hyperplanes are thus all of the form
$$\left\{\ord_{E_i}\left(g_1\right)s_1+\dots + \ord_{E_i}\left(g_p\right)s_p=k_i\right\},$$
but in general not all these hyperplanes are facet-defining. The relevance of the facet-defining hyperplanes is highlighted in the following theorem:
\begin{theorem}[\cite{cassou-nogues_libgober_2010},{\cite[Corollary 1.5]{BVW21}}]\label{LCTBS}
If $\{\ord_{E_i}(f_1)s_1+\dots + \ord_{E_i}(f_p)s_p=k_i\}$ is a facet-defining hyperplane and $\ord_{E_i}(f_j)\not=0$ for all $j=1,\dots,p,$ then
$\{\ord_{E_i}(f_1)s_1+\dots + \ord_{E_i}(f_p)s_p=-k_i\}$ is an irreducible component of $V(B_G)$.
\end{theorem}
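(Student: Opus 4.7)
The plan is to follow the distribution-theoretic approach originating with Cassou-Nogu\`es and Libgober, as sharpened in \cite{BVW21}. The strategy is to use the log-resolution $\mu\colon Y'\to Y$ to study the complex power $g_1^{s_1}\cdots g_p^{s_p}$ as a meromorphic family of distributions in $(s_1,\dots,s_p)$, and then to read off from the functional equation $P\bullet g^{s+1}=b(s)g^s$ which hyperplanes must appear in $V(B_G)$.

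First I would pick a general point $y_0$ of $E_i^\circ=E_i\setminus\bigcup_{j\neq i}E_j$ and local coordinates $(y_1,\dots,y_n)$ on $Y'$ around $y_0$ so that $E_i$ is cut out by $y_1=0$ and no other component of $\mu^{-1}(V(g))$ meets the chart. In these coordinates, $\mu^*g_j=u_j\,y_1^{a_{ij}}$ with $u_j$ a unit, while the Jacobian of $\mu$ contributes a factor of $v\,y_1^{k_i-1}$, where $k_i=\ord_{E_i}(K_{Y'/Y})+1$. Pulling back $|g_1|^{2s_1}\cdots|g_p|^{2s_p}$ against a smooth compactly supported test form concentrated near $y_0$ and performing the $y_1$-integration produces a meromorphic function in $s$ with a simple pole along the hyperplane $\sum_j a_{ij}s_j+k_i=0$. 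The hypothesis $\ord_{E_i}(g_j)\neq 0$ for all $j$ ensures that the hyperplane actually depends on every coordinate and is not degenerate.

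Next I would invoke the defining functional equation for $B_G$: iterating $P\bullet g^{s+1}=b(s)g^s$, the analytic continuation of the pulled-back distribution is controlled by $b(s)$, so every pole of this distribution forces a zero of some element of $B_G$, which by Sabbah's theorem is supported on affine hyperplanes. The shift from $+k_i$ to $-k_i$ in the statement is merely the normalization convention $g^{s+1}$ versus $g^s$ in the definition of the Bernstein--Sato ideal.

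The main obstacle is the non-cancellation of the pole: a priori, contributions from other exceptional divisors could cancel the pole produced by $E_i$. This is precisely where the facet-defining assumption enters. Since $\{\sum a_{ij}s_j=k_i\}$ contributes a facet of $\LCT(G)$ of maximal dimension $p-1$, there is a dense open subset of this facet along which none of the other inequalities $\sum a_{kj}s_j\le k_k$ is tight. Hence the residue along this facet can be computed by localizing to charts meeting only $E_i$, where the computation above shows it is non-zero. This is the residue/monodromy argument of Cassou-Nogu\`es--Libgober, and in the formulation of \cite[Corollary 1.5]{BVW21} it is repackaged as a nearby-cycle / $V$-filtration statement producing an explicit point in the codimension-one locus of $V(B_G)$ on the prescribed hyperplane, which then forces the whole hyperplane to be a component by Sabbah's theorem.
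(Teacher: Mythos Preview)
The paper does not give its own proof of this theorem: it is quoted from the literature, with attribution to \cite{cassou-nogues_libgober_2010} and \cite[Corollary 1.5]{BVW21}, and is then used as a black box in the discussion of LCT-polytopes. So there is no in-paper argument to compare against.

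That said, your sketch is a faithful outline of the Cassou-Nogu\`es--Libgober archimedean argument. The local computation on the resolution, the identification of the pole of $\int |g|^{2s}\,\varphi$ along $\sum_j a_{ij}s_j+k_i=0$, and the passage from poles of the distribution-valued function to zeros of elements of $B_G$ via the functional equation are all correct in spirit. Your explanation of why the facet-defining hypothesis prevents cancellation---namely that on a dense open of the facet only the inequality coming from $E_i$ is active, so the residue is computed in charts meeting $E_i$ alone---is exactly the point. One small correction: the hypothesis $a_{ij}=\ord_{E_i}(g_j)\neq 0$ for all $j$ is not merely about the hyperplane being ``non-degenerate'' in the coordinates. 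It is what guarantees that the shift $s\mapsto s+\mathbf{1}$ built into the definition of $B_G$ genuinely translates the polar hyperplane in every direction; without it, the iteration of the functional equation would not force the specific translate $\sum_j a_{ij}s_j=-k_i$ (as opposed to some other parallel hyperplane, or none at all) to lie in $V(B_G)$. This is also why the condition appears explicitly in \cite[Corollary 1.5]{BVW21}. With that caveat, your proposal matches the cited proofs.
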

It follows from the proof of~Theorem~\ref{bs-sf} that if $X$ is sch\"on we can construct a log resolution $\mu:\dot{X}\to Y$ of the divisor of $g$ as the closure of $X$ inside a toric variety with fan $\Sigma\subseteq \mathbb{R}_{\geq 0}^p\cap \Trop(X)$. In this log resolution the components of $\mu^{-1}(V(g))$ correspond to the rays in $\Sigma$, and if  $\tau$ is a ray in $\Sigma$, $\ord_{E_\tau}(g_j)=(v_\tau)_j$. We denote by $k_\tau$ the integer $\ord_{E_\tau}(K_{X^\Sigma/X})+1$. It follows that
$$\LCT(G) =  \left\{(s_1,\dots,s_p)\in \RR_{\geq 0}^p\mid \sum_{j=1}^p \left(v_{\tau_i}\right)_js_j\leq k_{\tau_i} \text{ for }i=1,\dots,q\right\}.$$
By Theorem~\ref{bs-sf},  for every rigid ray $\tau$ in $\RR_{\geq 0}^p$, $\tau^\perp\in \BS_G$, i.e., $\tau^\perp$ is a Bernstein--Sato slope of $G$. This means precisely that at least one affine translate of $\tau^\perp$ lies in $V(B_G)$. A natural candidate for such a translate is the one corresponding to the log-canonical threshold, leading to the following conjecture.
\begin{conjecture}
For every rigid ray $\tau\in \Trop(X)\cap \RR_{\geq 0}^p$, $\{(v_\tau)_1s_1+\dots+(v_\tau)_ps_p=k_\tau\}$  is facet-defining.
\end{conjecture}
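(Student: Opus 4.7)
The plan is to attack the conjecture in two stages: first for indecomposable central hyperplane arrangements, where the relevant combinatorics is packaged by matroid polytopes, and then to sketch a direct polytopal argument for the general sch\"on case. For a rigid ray $\tau$, facet-definingness of $H_\tau = \{\sum_j (v_\tau)_j s_j = k_\tau\}$ reduces to two assertions: (i) $H_\tau$ meets $\LCT(G)$ in its boundary, and (ii) this intersection has dimension $p-1$, i.e., the defining inequality is not redundant. Condition (i) is automatic once the inequality $\sum_j(v_\tau)_j s_j\leq k_\tau$ is tight somewhere on $\LCT(G)$, so the substantive issue is non-redundancy.

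For the arrangement case, let $\mathcal{A}=\{V(f_1),\dots,V(f_p)\}$ be indecomposable and central with matroid $M$, and take the Bergman fan structure on $\Trop(X)$. Its minimal coarsening has rays indexed by the flacets of $M$; for a flacet $F$, the primitive generator satisfies $(v_F)_i=1$ if $i\in F$ and $0$ otherwise. A direct toric-geometric computation on the tropical compactification gives $k_{v_F}=\mathrm{rk}(F)$, so together with the trivial constraints $s_i\leq 1$ coming from the divisors $V(f_i)$, the $\tau$-constraints are precisely the defining inequalities of the matroid polytope of $M$. Proposition~2.4 in \cite{Feichtner2005} then asserts that the facet-defining inequalities of the matroid polytope are exactly those indexed by flacets, which is the desired statement for rigid rays.

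For the general sch\"on setting, my approach would be to exhibit $p-1$ affinely independent points in $H_\tau\cap\LCT(G)$ by suitably scaling and combining primitive generators of neighbouring rays in $\Sigma\cap\RR_{\geq 0}^p$, and to argue that rigidity of $\tau$ prevents $(v_\tau,k_\tau)$ from being a non-negative combination of the pairs $(v_{\tau'},k_{\tau'})$ coming from other rays. The main obstacle, I expect, is exactly this bridge from the \emph{local} algebraic notion of rigidity (an initial-ideal condition at the single ray $v_\tau$) to the \emph{global} combinatorial assertion of facet-definingness (a statement about the whole polytope). In the matroid case this gap is closed by Feichtner's theorem, but in general one probably needs either a careful case analysis on the structure of $\Sigma\cap \RR_{\geq 0}^p$, a convexity argument using the refinement $\Sigma'$ from the proof of Theorem~\ref{bs-sf}, or a new characterisation of rigidity in terms of the normal fan of $\LCT(G)$.
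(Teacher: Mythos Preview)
First, a framing point: the statement is a \emph{conjecture} in the paper. The paper does not prove it in general; it only verifies it in the single special case of an indecomposable central hyperplane arrangement, and it records this as an example rather than a theorem. So your ``second stage'' is not something to compare against---there is no proof of the general sch\"on case in the paper, and your sketch there is, as you yourself say, only a plan with the main obstacle still open.

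For the arrangement case, your argument contains a genuine error in identifying the rigid rays. Because the arrangement is central, the ideal $I$ of $X$ is homogeneous, so $(1,\dots,1)$ lies in the lineality space of the Gr\"obner fan: for \emph{every} $w$ one has $\init_w(I)=\init_{w+\epsilon(1,\dots,1)}(I)$. Hence any ray $v_F$ with $F$ a proper flacet is \emph{not} rigid, since perturbing $v_F$ in the direction $(1,\dots,1)\notin\RR v_F$ leaves the initial ideal unchanged. Indecomposability says exactly that the lineality space is the line $\RR\cdot(1,\dots,1)$ and nothing larger, so the only rigid rays are $\pm(1,\dots,1)$, and the only one in $\RR_{\geq 0}^p$ is $(1,\dots,1)$ itself. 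This is what the paper asserts in the example following the conjecture.

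Consequently the verification needed is much lighter than what you outline: one only has to show that the single hyperplane $H=\{s_1+\dots+s_p=\dim Y\}$ is facet-defining for $\LCT(G)$. The paper does this by observing that $H\cap\LCT(G)$ is the matroid polytope of the underlying matroid, and then invoking the result of \cite{Feichtner2005} that this polytope has dimension $p-1$. Your appeal to Feichtner--Sturmfels is the right endgame, but you reach it by way of an incorrect claim (all flacet rays rigid) and a mis-identification of $\LCT(G)$ with the matroid polytope itself rather than with a polytope of which the matroid polytope is the relevant facet.
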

\begin{example}
Let $Y\subseteq \CC^p$ be a linear subspace such that the restrictions of the coordinate functions to $Y$ define a central indecomposable hyperplane arrangement. In this situation, the rigid rays in $\Trop(X)$ are $(1,\dots,1)$ and $(-1,\dots,-1)$. These rays are rigid since $X$ is homogeneous and they are the only rigid rays since the arrangement is indecomposable. The matroid polytope of the matroid associated to the arrangement is the intersection of $\LCT(G)$ with the hyperplane $H\coloneqq \{s_1+\dots+s_p=\dim Y\}$. It is shown in \cite{Feichtner2005} that the matroid polytope has dimension $p-1$. Hence $H$ is facet-defining and by Theorem \ref{LCTBS} $H$ is an irreducible component of $V(B_G)$. This proves \cite[Conjecture 3]{BudurBSLS} for complete factorizations of hyperplane arrangements. Recall that a factorization of a hyperplane arrangement is {\em complete} if each of its factors is linear.
\end{example}

\section{Example: Flipping a biased coin}\label{section examples}
In this section, we pick up and continue the example of flipping a biased coin twice that was studied in \cite{DMS21} and~\cite{SatStu19}. 
Consider the smooth curve $\overline{X}$ in~$\PP ^2$ defined by the homogeneous polynomial  
$$f= \det \begin{pmatrix}
p_0 & p_1\\
p_0+p_1 & p_2
\end{pmatrix}=p_0p_2-(p_0+p_1)p_1.$$ 
As pointed out in~\cite[Example 2]{DMS21}, this is the implicit representation
of the statistical model describing the following experiment: {\em Flip a biased coin. If it shows head, flip it again}. Here, the $p_0,p_1$ and $p_2$ are to be thought of as representing the probabilities of three possible outcomes of the experiment. Since these outcomes must sum up to one, we impose the additional condition that $p_0+p_1+p_2\not=0$ and hence consider the variety $\overline{X}\setminus \mathcal{H}$, where $\mathcal{H}$ is the collection of hyperplanes $\{p_0p_1p_2(p_0+p_1+p_2)=0 \}$ in~$\PP^2$. We embed $ \overline{X}\setminus \mathcal{H}$ into the 3-torus via the morphism
$$ P\colon \overline{X}\setminus \mathcal{H} \to \left(\CC^*\right)^3, \quad  \left(p_0:p_1:p_2\right)\mapsto  \left( \frac{p_0}{p_0+p_1+p_2}, \frac{p_1}{p_0+p_1+p_2}, \frac{p_2}{p_0+p_1+p_2}\right).$$
 We denote by $X$ the image of $P$ and by $Y$ the closure of $X$ inside $\CC^3$. The ideal defining $X$ is generated by the two polynomials
 $t_0t_2-(t_0+t_1)t_1$ and  $t_0+t_1+t_2-1.$
This ideal of $\CC[t_0,t_1,t_2]$ is prime  and hence $Y$ is defined by the same equations. Using {\tt Gfan}~{\cite{Gfan}, we compute that the primitive ray generators in the Gr\"{o}bner fan of~$Y$ are given by the three rows $w_1,w_2,w_3$ of the matrix
\begin{align*}
\begin{pmatrix}
2 & 1 &0\\
0 & 1 & 1\\
-2 & -2 & -1\\ 
\end{pmatrix}  \eqqcolon   \begin{pmatrix}
w_1\\
w_2\\
w_3
\end{pmatrix} .
\end{align*}
Since the tropical variety of $Y$ is one-dimensional, all three rays are indeed rigid. Hence, by Proposition~\ref{main}, the hyperplanes orthogonal to them recover the codimension-one components of $S_F$, where $F=(t_0|_X,t_1|_X,t_2|_X)$. In other words, the codimension-one part of $S_F$ is equal to
$$\{2s_0+s_1=0\}\cup \{s_1+s_2=0\}\cup\{2s_0+2s_1+s_2=0\}.$$
This model has maximum likelihood degree one. We denote the maximum likelihood estimator by $\psi \colon \PP^2\dashrightarrow X.$
As described in Proposition~\ref{prop MLEdml1},  each $\psi_i= f_i \circ \psi$ is a rational function on~$\PP^2$ whose numerator and denominator are products of the linear terms $2s_0+s_1$, $s_1+s_2$, and $2s_0+2s_1+s_2$. 
More precisely, there exist complex constants $c_1,c_2,c_3$ such that
$$\psi\left(s_0,s_1,s_2\right) = \left(c_1 \frac{\left(2s_0+s_1\right)^{2}}{\left(2s_0+2s_1+s_2\right)^{2}}, \, c_2\frac{\left(2s_0+s_1\right) \left(s_1+s_2\right)}{\left(2s_0+2s_1+s_2\right)^2}, \, c_3\frac{\left(s_1+s_2\right)}{\left(2s_0+2s_1+s_2\right)} \right).$$
For $c_1 = c_2=c_3=1$, this recovers the MLE computed in~\cite{DMS21}.
Finally, we compute the Bernstein--Sato ideal of the tuple of coordinate functions on~$Y$. Under the isomorphism
$$\CC\stackrel{\cong}{\longrightarrow} Y,\quad x\mapsto \left(x^2,x(1-x),1-x\right),$$
the tuple $G=(t_0|_Y,t_1|_Y,t_2|_Y)$ on $Y$ corresponds  to the tuple $(x^2,x(1-x),1-x)$ on~$\CC$. Using {\tt Singular}, we find that the Bernstein--Sato ideal of this tuple is generated by
$$ \prod_{k=1}^3\left(2s_0+s_1+k\right)\cdot \prod_{l=1}^2\left(s_1+s_2+l\right)$$
and thus the Bernstein--Sato slopes of $G$ are
$\BS_G = V\left(2s_0+s_1\right)\cup V\left(s_1+s_2\right).$
Indeed, the components correspond to $w_1$ and~$w_2$. The ray~$w_3$ does not contribute to the Bernstein--Sato slopes since it is not contained in~$\RR_{\geq 0}^3$.

\bigskip
\subsection*{Funding} The second author is supported by a PhD Fellowship from FWO (Research Foundation - Flanders).
\subsection*{Acknowledgments}
We are grateful to Nero Budur, Johannes Nicaise, Yue Ren, and Bernd Sturmfels for insightful discussions. We thank the anonymous referees for carefully reading our article and the valuable feedback.

\smallskip
\bibliography{literature}

\begin{thebibliography}{10}

\bibitem{Bath19}
D.~Bath.
\newblock Combinatorially determined zeroes of {B}ernstein--{S}ato ideals for
  tame and free arrangements.
\newblock {\em J. Singul.}, 20:165--204, 2020.

\bibitem{BMM}
J.~Brian\c{c}on, P.~Maisonobe, and M.~Merle.
\newblock \'{E}ventails associ\'{e}s \`{a} des fonctions analytiques.
\newblock {\em Tr. Mat. Inst. Steklova}, 238:70--80, 2002.

\bibitem{BudurBSLS}
N.~Budur.
\newblock Bernstein--{S}ato ideals and local systems.
\newblock {\em Ann. Inst. Fourier (Grenoble)}, 65(2):549--603, 2015.

\bibitem{BVW21}
N.~Budur, R.~van~der Veer, and A.~Van~Werde.
\newblock Estimates for zero loci of {B}ernstein--{S}ato ideals.
\newblock Preprint arXiv:2111.03334, 2021.

\bibitem{bvwz}
N.~{Budur}, R.~{van der Veer}, L.~{Wu}, and P.~{Zhou}.
\newblock Zero loci of {B}ernstein--{S}ato ideals.
\newblock {\em Invent. Math.}, 225:45--72, 2021.

\bibitem{bvwz2}
N.~{Budur}, R.~{van der Veer}, L.~{Wu}, and P.~{Zhou}.
\newblock Zero loci of {B}ernstein--{S}ato ideals-{II}.
\newblock {\em Selecta Math. (N.S.)}, 27(32), 2021.

\bibitem{BudurWang}
N.~Budur and B.~Wang.
\newblock Bounding the maximum likelihood degree.
\newblock {\em Math. Res. Lett.}, 22(6), 2014.

\bibitem{cassou-nogues_libgober_2010}
P.~Cassou-Nogu\'{e}s and A.~Libgober.
\newblock Multivariable {H}odge theoretical invariants of germs of plane
  curves.
\newblock {\em J. Knot Theory Ramifications}, 20(06):787--805, 2011.

\bibitem{cohen_denham_falk_varchenko_2011}
D.~Cohen, G.~Denham, M.~Falk, and A.~Varchenko.
\newblock Critical points and resonance of hyperplane arrangements.
\newblock {\em Canad. J. Math.}, 63(5):1038–1057, 2011.

\bibitem{cohen2012}
D.~C. Cohen, G.~Denham, M.~Falk, and A.~Varchenko.
\newblock Vanishing products of one-forms and critical points of master
  functions.
\newblock In {\em Arrangements of Hyperplanes — Sapporo 2009}, pages 75--107,
  Tokyo, Japan, 2012. Mathematical Society of Japan.

\bibitem{Singular}
W.~Decker, G.-M. Greuel, G.~Pfister, and H.~Sch\"onemann.
\newblock {\sc Singular} {4-1-3} --- {A} computer algebra system for polynomial
  computations.
\newblock \url{http://www.singular.uni-kl.de}, 2020.

\bibitem{DMS21}
E.~Duarte, O.~Marigliano, and B.~Sturmfels.
\newblock Discrete {S}tatistical {M}odels with {R}ational {M}aximum
  {L}ikelihood {E}stimates.
\newblock {\em Bernoulli}, 27(1):135--154, 02 2021.

\bibitem{Feichtner2005}
E.~M. Feichtner and B.~Sturmfels.
\newblock Matroid polytopes, nested sets and {B}ergman fans.
\newblock {\em Port. Math. Nova S\'{e}rie}, 62(4):437--468, 2005.

\bibitem{FK}
J.~Franecki and M.~Kapranov.
\newblock The {G}auss map and a noncompact {R}iemann--{R}och formula for
  constructible sheaves on semiabelian varietes.
\newblock {\em Duke Math. J.}, 104(1):171--180, 2000.

\bibitem{GL96}
O.~Gabber and F.~Loeser.
\newblock Faisceaux pervers $\ell$-adiques sur un tore.
\newblock {\em Duke Math. J.}, 83(3):501--606, 1996.

\bibitem{HomTropVar}
P.~Hacking.
\newblock The homology of tropical varieties.
\newblock {\em Collect. Math.}, 59:263--273, 2007.

\bibitem{Hov77}
A.~Hovanski\v{i}.
\newblock Newton polyhedra and toroidal varieties.
\newblock {\em Funkcional'nyi Analiz i ego Prilo\v{z}enija}, 11:56--64, 1977.

\bibitem{huh_2013}
J.~Huh.
\newblock The maximum likelihood degree of a very affine variety.
\newblock {\em Compos. Math.}, 149(8):1245–1266, 2013.

\bibitem{Huh14}
J.~Huh.
\newblock Varieties with maximum likelihood degree one.
\newblock {\em J. Algebr. Stat.}, 5:1--17, 2014.

\bibitem{HS14}
J.~Huh and B.~Sturmfels.
\newblock {Likelihood geometry}.
\newblock In {\em Combinatorial algebraic geometry}, volume 2108 of {\em
  Lecture notes in mathematics}, pages 63--117. Springer, New York, 2014.

\bibitem{Gfan}
A.~N. Jensen.
\newblock Gfan, a software system for {G}r\"obner fans and tropical varieties.
\newblock Available at
  \url{http://home.imf.au.dk/jensen/software/gfan/gfan.html}.

\bibitem{dmodlib}
V.~Levandovskyy and J.~Mart\'{i}n-Morales.
\newblock dmod\_lib: A {{\tt Singular:Plural}} library for algorithms for
  algebraic {$D$}-modules.
\newblock \url{https://www.singular.uni-kl.de/Manual/4-2-0/sing\_537}.

\bibitem{OnTropComp}
M.~Luxton and Z.~Qu.
\newblock Some results on tropical compactifications.
\newblock {\em Trans. Amer. Math. Soc.}, 363(9):4853--4876, 2011.

\bibitem{MS15}
D.~Maclagan and B.~Sturmfels.
\newblock {\em {Introduction to tropical geometry}}, volume 161 of {\em
  Graduate studies in mathematics}.
\newblock American Mathematical Society, Providence, R.I., 2015.

\bibitem{Mai16}
P.~Maisonobe.
\newblock Filtration relative, l'id\'eal de {B}ernstein et ses pentes.
\newblock 2016.
\newblock hal-01285562v2.

\bibitem{maisHPA}
P.~{Maisonobe}.
\newblock L'id{\'e}al de {B}ernstein d'un arrangement libre d'hyperplans
  lin{\'e}aires.
\newblock arXiv:1610.03356, 2016.

\bibitem{RT18}
J.~I. Rodriguez and X.~Tang.
\newblock Probabilistic algorithm for computing data-discriminants of
  likelihood equations.
\newblock {\em Proceedings of the 2015 ACM on International Symposium on
  Symbolic and Algebraic Computation}, 2015.

\bibitem{Sabbah}
C.~Sabbah.
\newblock Proximit\'e \'evanescente. {I}. {L}a structure polaire d'un
  {$\mathcal {D}$}-module.
\newblock {\em Compos. Math.}, 62(3):283--328, 1987.

\bibitem{SatStu19}
A.-L. Sattelberger and B.~Sturmfels.
\newblock {$D$}-modules and holonomic functions.
\newblock arXiv:1910.01395, 2019.

\bibitem{ST20}
B.~Sturmfels and S.~Telen.
\newblock Likelihood equations and scattering amplitudes.
\newblock {\em Algebraic Statistics}, 12(2):167--186, 2021.

\bibitem{Tev}
J.~Tevelev.
\newblock Compactifications of subvarieties of tori.
\newblock {\em Amer. J. Math.}, 129(4):1087--1104, 2007.

\bibitem{Wu20}
L.~Wu.
\newblock Bernstein--{S}ato ideals and hyperplane arrangements.
\newblock {\em J. Pure Appl. Algebra}, 226:106987, July 2022.

\end{thebibliography}
\bibliographystyle{abbrv}
\bigskip

\enddocument